\newcommand \Appr{{\rm{ Appr}}}
\newcommand \NN{\mathbb{N}}
\newcommand \RR{\mathbb{R}}
\def\bbN{{\mathbb N}}
\newtheorem{thm}{Theorem}[section]
\newtheorem{lem}[thm]{Lemma}
\newtheorem{dfn}[thm]{Definition}
\newtheorem{pro}[thm]{Proposition}
\newtheorem{lemma}[thm]{Lemma}
\newtheorem{prob}[thm]{Problem}
\newtheorem{rem}[thm]{Remark}
\newtheorem{question}[thm]{Question}
\author[A.S. Kechris]{Alexander S. Kechris}
\address{Department of Mathematics, Caltech, 1200 E. California Blvd,
Pasadena, CA 91125, USA}
\email{kechris@caltech.edu}
\urladdr{http://www.math.caltech.edu/~kechris/}
\author[M. Malicki]{Maciej Malicki}
\address{Institute of Mathematics, Polish Academy of Sciences, ul. Sniadeckich 8, 00-656, Warsaw, Poland}
\email{mamalicki@gmail.com}
\author[A. Panagiotopoulos]{Aristotelis Panagiotopoulos}
\address{Institut f\"ur Mathematische Logik und Grundlagenforschung, Westfalische Wilhelms-Universit\"at M\"unster, Einsteinstr. 62, 48149 M\"unster, Germany}
\email{apanagio@uni-muenster.de}
\author[J. Zielinski]{Joseph Zielinski}
\address{Department of Mathematics, GAB 435, University of North Texas,
Denton, TX 76201, USA}
\email{Joseph.Zielinski@unt.edu}
\email{zielinski.math@gmail.com}
\subjclass[2000]{Primary 03E15; Secondary 54H20, 22D05}
\keywords{Polish group, locally compact group,  isometry groups, generically ergodic, non-archimedean Polish groups, essentially countable, Borel reduction, stormy action}
\thanks{Research of A.S. Kechris  was partially supported by NSF Grants DMS-1464475 and  DMS-1950475 }
\begin{document}
\title[Polish groups with non-essentially countable actions]{On Polish groups admitting non-essentially countable actions}

\begin{abstract}
It is a long-standing open question whether every Polish group that is not locally compact admits a Borel action on a standard Borel space whose associated orbit equivalence relation is not essentially countable. We answer this question positively for the class of all Polish groups that embed in the isometry group of a locally compact metric space. This class contains all non-archimedean Polish groups, for which we provide an alternative proof based on a new criterion for non-essential countability. Finally, we  provide the following variant of a theorem of Solecki: every infinite-dimensional Banach space has a continuous action whose orbit equivalence relation is Borel but not essentially countable.
\end{abstract}

\maketitle

\section{Introduction}

Motivated by foundational questions about the intrinsic complexity of various mathematical classification problems, one of the prominent ongoing projects in descriptive set theory seeks to organize the collection of all definable equivalence relations with respect to Borel reductions.

\begin{dfn}
Let $E$ and $F$ be analytic equivalence relations on standard Borel spaces $X$ and $Y$ respectively. We say that $E$ {\bf Borel reduces} to $F$, in symbols $E\leq_B F$, if there is a Borel map $f\colon X\to Y$ so that \[x E x' \iff f(x) F f(x').\]
\end{dfn}

The simplest equivalence relations in the Borel reduction complexity hierarchy are the ones that can be classified up to the equality relation $=_Y$ on some Polish space $Y$: an equivalence relation $E$ on $X$ is {\bf concretely classifiable} (or {\bf smooth})  if there is a Polish space $Y$ so that $E\leq_B =_Y$. Another important  class of equivalence relations of relatively low complexity is the class of \emph{essentially countable} equivalence relations. A Borel equivalence relation $E$ on $X$ is {\bf countable} if every $E$-class is countable.  We say that $E$ on $X$ is {\bf essentially countable} if $E\leq_B F$, for some countable Borel equivalence relation $F$. 

In many interesting cases the equivalence relation under consideration is induced by a Borel action of a Polish group on a standard Borel space. In fact, by the Feldman-Moore theorem \cite{Feldman} every countable Borel equivalence relation is the \emph{orbit equivalence relation} of a countable (discrete) group.
Let $G$ be a Polish group. A {\bf Borel $G$-space} is a standard Borel space $X$ together with a Borel action of $G$ on $X$. If $X$ is additionally Polish and the action  continuous,  then we call the Borel $G$-space $X$ a {\bf Polish $G$-space}. If $X$ is a Borel $G$-space then by \cite[Theorem 5.2.1]{BK}  we can always replace $X$ with a Polish space that is Borel isomorphic to it so that the new space is a Polish $G$-space.
 If $x\in X$, we denote by $[x]$ the $G$-orbit of $x$. To each Borel $G$-space $X$ we associate the {\bf orbit equivalence relation} $E^G_X$ defined by setting $x E^G_X x'$ if and only if $[x]=[x']$.

Often, dynamical properties of a  $G$-space $X$ are reflected in the complexity of the associated  orbit equivalence relation $E^G_X$. As a consequence, topological restrictions on $G$ impose restrictions on the complexity of $E^G_X$. For example, every orbit equivalence relation induced by a Borel action of a compact Polish group is concretely classifiable since the assignment $x\mapsto [x]$ is a Borel reduction from $E^G_X$ to the Polish space of all compact subsets of $X$. In \cite{So}, Solecki  provides a converse to this fact: if $G$ is not compact there is a Borel $G$-space whose orbit equivalence relation is not concretely classifiable. 

Similarly to the compact case, it is a theorem of Kechris \cite{K92},  that every orbit equivalence relation induced by any locally compact Polish group $G$ is essentially countable. The question of whether this theorem too admits a converse was raised in \cite{K92}; see also \cite[Problem 3.15]{K19}:

\begin{prob}\label{Problem}
Let $G$ be a Polish group with the property that every equivalence relation induced by a Borel action of $G$ on standard Borel space is {\bf essentially countable}. Is $G$ locally compact?
\end{prob}

In \cite{Hj00}, G. Hjorth referred to this problem as ``stubbornly open,'' and it so remains up to this day. However, an affirmative answer has been obtained so far for certain classes of Polish groups:
\begin{enumerate}
\item[(i)]  \cite{Tho} All Polish groups that do not admit a complete left-invariant metric;
\item[(ii)] \cite{So} All separable Banach spaces, viewed as groups under addition;
\item[(iii)] \cite{Ma} All abelian isometry groups of separable locally compact metric spaces.
\end{enumerate}

In this paper we establish further progress regarding Problem \ref{Problem}, in several directions. First we provide the following generalization of (iii) above. 

\begin{thm}\label{T:1}
Let $G$ be the group of isometries of a separable locally compact metric space. If all orbit equivalence relations induced by Borel actions of $G$ on standard Borel spaces are essentially countable, then $G$ is locally compact.
\end{thm}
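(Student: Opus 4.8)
The plan is to argue by contraposition: assuming $G=\mathrm{Iso}(M)$ is \emph{not} locally compact, I will produce a continuous action of $G$ on a Polish space whose orbit equivalence relation is Borel but not essentially countable. The first step is to translate non-local-compactness into a concrete geometric phenomenon via the Arzel\`a--Ascoli theorem. A set of isometries of $M$ is relatively compact in $G$ (with the topology of uniform convergence on compacta) exactly when all of its pointwise orbits are relatively compact in $M$. Since the basic identity neighborhoods of $G$ have the form $U_{F,\varepsilon}=\{g\in G: d(gx,x)<\varepsilon \text{ for all }x\in F\}$ with $F\subseteq M$ finite, the failure of local compactness says precisely: for every finite $F\subseteq M$ and every $\varepsilon>0$ there is a point $y\in M$ whose orbit $\{gy: g\in U_{F,\varepsilon}\}$ is not relatively compact. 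In words, arbitrarily small perturbations of any finite configuration can be realized by isometries that simultaneously displace some other point arbitrarily far. I think of this as the ``storm'': a small local disturbance propagating to an unboundedly large displacement elsewhere. Iterating over an exhaustion $F_1\subseteq F_2\subseteq\cdots$ of a countable dense set with $\varepsilon_n\downarrow 0$ yields isometries $g_n\in U_{F_n,\varepsilon_n}$ and points $y_n$ with $g_n y_n$ escaping every compact set, and this data is the raw material for the construction.

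The second step is the key tool, a dynamical criterion for non-essential-countability that I would isolate and prove separately. It cannot be turbulence: the example $M=\bigsqcup_n\{p_n,q_n\}$ (uniformly discrete, with the pairs forced apart) has $\mathrm{Iso}(M)\cong (\mathbb Z/2)^{\mathbb N}\rtimes S_\infty$, which is non-archimedean, and \emph{every} orbit equivalence relation of a non-archimedean group is classifiable by countable structures, so no action is turbulent; yet the $S_\infty$-factor is not locally compact and must carry a non-essentially-countable action. The right notion --- call an action \emph{stormy} --- should demand that, generically, local orbits can be steered through any prescribed ``far'' region by the small-perturbation-large-displacement moves extracted above, while individual orbits remain meager. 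I would then prove: a stormy continuous action is \emph{generically $F$-ergodic for every countable Borel equivalence relation $F$}, meaning every Baire-measurable homomorphism from $E^G_X$ to $F$ sends a comeager set into a single $F$-class. Because a Borel reduction $E^G_X\le_B F$ would in particular be such a homomorphism, a comeager set would then fall into one $F$-class and hence, the map being a reduction, into one $G$-orbit, contradicting meagerness of orbits. The proof of the criterion is a Baire-category argument: by Feldman--Moore write $F=E^\Gamma_Z$ for a countable group $\Gamma$; the $f$-preimage of a single $F$-class is a countable union of pieces, each of which the storm moves off of comeagerly, so no Baire-measurable $f$ can separate the stormy comeager set into countably many $F$-classes.

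The third step is to build an explicit stormy $G$-space and verify the axioms from the geometric input of the first step. My first attempt would be a continuous action of $G$ on a Polish space naturally attached to $M$ in which non-properness registers as ``escape to infinity'': for instance the space of (sub)probability measures on $M$ with the weak topology, or the hyperspace of closed sets, or a $G$-invariant subspace of $M^{\mathbb N}$ built from the escaping sequences $(y_n)$. In each case the isometries $g_n$ produced above witness that one can, with arbitrarily small motion of any fixed finite pattern, transport mass or points out of any preassigned compact set, which is exactly the steering property storminess requires; meagerness of orbits follows because the escaping targets form a perfect family of mutually ``incompatible'' directions. When $M$ visibly splits off a discrete non-compact part, as in the example above, I would instead locate a closed copy of a non-locally-compact non-archimedean group $H\le G$, invoke the easier non-archimedean instance of the criterion, and transfer the resulting action to $G$ by a standard induced-action argument, since $E^G_{G\times_H Y}$ is Borel bireducible with $E^H_Y$.

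I expect the main obstacle to lie in this third step: passing from the purely \emph{qualitative} non-local-compactness of $\mathrm{Iso}(M)$ to a single, uniformly \emph{stormy} action. The space $M$ can fail properness in geometrically complicated ways, so the escaping data $(g_n,y_n)$ need not be coherent, and one must organize them --- presumably by exhausting $M$ by compacta and diagonalizing --- into one action in which the steering moves are available \emph{simultaneously and generically}, not merely one at a time, while keeping the orbit equivalence relation Borel. Verifying the stormy axioms (the generic availability of large displacements together with meagerness of orbits) against an arbitrary such $M$ is where the real work will be, and it is also where the precise definition of ``stormy'' must be calibrated so that the category argument of the second step goes through.
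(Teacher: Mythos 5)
Your proposal is a research program, not a proof, and the step you yourself defer --- Step 3, together with the precise formulation and proof of the ``stormy'' criterion in Step 2 --- is exactly where the mathematical content lies. Two concrete points show the gap is genuine and not routine work. First, the paper explicitly poses the existence of a purely topological argument for \cref{T:1} as a question, and answers it only for the subclass of non-archimedean Polish groups (\cref{S:Bernoulli}); for a general isometry group of a separable locally compact space no such argument is known. Second, Hjorth's actual notion of storminess --- the one for which the implication ``stormy $\Rightarrow$ not essentially countable'' is a theorem --- provably \emph{fails} for the natural candidate actions in the non-archimedean case: if $G\leq S_\infty$ is closed and acts on the free part $[\RR]^{\NN}$ of its Bernoulli shift, then for $V$ the pointwise stabilizer of a finite set $F$ and $x$ injective, agreement of $hx$ with $gx$ on the finitely many coordinates $g[F']$ forces $h$ to agree with $g$ on $F'$, so images of basic open subsets of $V$ are relatively open in $Vx$; the orbit maps are open and the action is the opposite of stormy, even though its orbit equivalence relation is \emph{not} essentially countable when $G$ is not locally compact. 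This is precisely why the paper had to develop a new game-theoretic criterion (\cref{Criterion}, proved via \cref{th:Homo} and the machinery of \cite{LP18}) in place of storminess: the ``calibration'' you postpone to the end is the heart of the matter, not a technicality. Relatedly, whether every non-locally compact Polish group admits a stormy action would, by Hjorth's theorem, settle \cref{Problem} itself, so one cannot simply posit such an action for $\mathrm{Iso}(M)$ and expect the verification to be an exercise in organizing escaping sequences.

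For contrast, the paper's proof of \cref{T:1} sidesteps all of this by going through measurable dynamics. By the Gao--Kechris structure theorem, $\mathrm{Iso}(M)$ is (up to isomorphism) a closed subgroup of $\prod_n(S_\infty\ltimes K_n^{\NN})$ with each $K_n$ locally compact; such groups embed as closed subgroups of the unitary group $U(H)$, hence of ${\rm Aut}(X,\mu)$, and by the Kwiatkowska--Solecki spatial realization theorem this Boolean action is spatial. Then (\cref{23}) the diagonal action of $G$ on $(X^{\NN},\mu^{\NN})$ is free on a $G$-invariant conull Borel set, and the Feldman--Ramsey theorem (\cref{21}) --- a non-locally compact Polish group with a free measure-preserving Borel action induces a non-essentially countable orbit equivalence relation --- concludes. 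Your Step 1 (Arzel\`a--Ascoli reformulation of non-local compactness) and your observation that turbulence cannot be the right tool are both correct, and the induced-action transfer $E^H_Y\sim_B E^G_{G\times_H Y}$ is a legitimate device; but between these and a theorem stands the unconstructed action and the unproved criterion, i.e., the proof itself.
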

It has been shown in \cite[Theorem 6.3]{GK} that every closed subgroup of the group of isometries of a separable locally compact metric space is, up to topological group isomorphism, also the isometry group of a separable locally compact metric space. So this result also applies to closed subgroups of the group of isometries of a separable locally compact metric space. In particular,
the class of groups described in the statement of Theorem \ref{T:1} contains all non-archimedean Polish groups, i.e., all groups of the form $\mathrm{Aut}(\mathcal{N})$, where $\mathcal{N}$ is a  countable structure.

In fact we show (in \cref{23}) that \cref{T:1} holds for all spatial closed subgroups of the automorphism group of a standard, non-atomic probability space, a class of groups that contains all isometry groups of separable locally compact metric spaces. 

In \cite{So}, Solecki shows that every infinite dimensional separable Banach space admits a Borel action on a standard Borel space whose associated orbit equivalence relation is analytic non-Borel. Since every essentially countable equivalence relation is Borel, one may conclude with point (ii) above. Our next result shows that the  non-essentially countable dynamics of  such a Banach space can be witnessed by an orbit equivalence relation which, additionally, is Borel. 

\begin{thm}\label{T:2}
Let $Z$ be an infinite dimensional separable Banach space viewed as a group under addition. Then there exists a Polish $Z$-space $X$ whose associated orbit equivalence relation is Borel and not essentially countable.
\end{thm}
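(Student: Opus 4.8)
The plan is to realize the desired dynamics as a turbulent translation action, so that the resulting orbit equivalence relation is simultaneously Borel (which I engineer by hand) and not classifiable by countable structures (which forces non-essential countability). Since $Z$ is a separable Banach space, its dual $Z^\ast$ admits a countable family $(f_n)_{n\in\NN}$ of linearly independent, norm-$\le 1$ functionals that separate the points of $Z$; fixing scalars $c_n=2^{-n}$, define the continuous injective linear map
$$\phi\colon Z\to\RR^\NN,\qquad \phi(z)=\bigl(c_n f_n(z)\bigr)_{n\in\NN},$$
and let $Z$ act continuously on $X=\RR^\NN$ by translation, $z\cdot x=\phi(z)+x$. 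This is a Polish $Z$-space whose orbits are the cosets of the subgroup $H=\phi(Z)$, and whose orbit equivalence relation is $x\,E\,y\iff x-y\in H$.

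First I would check Borelness. As $\phi$ is a continuous injection of the Polish space $Z$, the Lusin--Souslin theorem gives that $H=\phi(Z)$ is a Borel subset of $\RR^\NN$; consequently $E$ is the preimage of $H$ under the continuous subtraction map, and is therefore Borel. Next I would verify the three clauses of turbulence for this action. Density of every orbit reduces to density of $H$: since $f_1,\dots,f_k$ are linearly independent, the map $z\mapsto(f_1(z),\dots,f_k(z))$ is onto $\RR^k$, so the finitely supported sequences are approximated by points of $H$, whence $H$ is dense. Meagerness of every orbit follows because $|c_n f_n(z)|\le c_n\|z\|\to 0$ forces $H\subseteq c_0\subsetneq\RR^\NN$; thus $H$ is a proper Borel subgroup, hence (by Pettis, together with connectedness of $\RR^\NN$) meager, and so is each of its translates.

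The crucial clause is that local orbits are somewhere dense: given a basic cylinder $U$ constraining the first $k$ coordinates and a norm-neighborhood $V$ of $0$ in $Z$, I would use a dual system $z^\ast_1,\dots,z^\ast_N\in Z$ with $f_i(z^\ast_j)=\delta_{ij}$ to adjust each of the first $N$ coordinates independently. Although a single step in direction $z^\ast_j$ of small $Z$-norm moves the $j$-th coordinate only slightly, concatenating many such small steps accumulates an arbitrary displacement in that coordinate while leaving the other controlled coordinates fixed and keeping the whole path inside $U$; hence the local orbit through any point is dense in $U$.

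The main obstacle is exactly this last point: one must see that, despite the Banach norm penalizing large moves, turbulence survives because the turbulence condition bounds only the size of individual steps and not their number, so that many small steps in a single coordinate direction (made possible by the surjectivity of finite systems of linearly independent functionals) realize large local displacements. Granting the three clauses, the action is turbulent, so by Hjorth's turbulence theory \cite{Hj00} its orbit equivalence relation is not classifiable by countable structures. Since every essentially countable equivalence relation is Borel reducible to a countable Borel equivalence relation, and is therefore classifiable by countable structures, $E$ cannot be essentially countable, completing the proof.
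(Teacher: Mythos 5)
Your argument is correct, but it takes a genuinely different route from the paper's. The paper realizes the action on the space $\mathrm{Lip}(Z)$ of $1$-Lipschitz functions $Z\to\RR$ with the shift action $(\zeta\cdot f)(z)=f(\zeta+z)$: it extracts a $Z$-invariant dense $G_\delta$ set on which the action is free (freeness gives Borelness of the orbit equivalence relation) and \emph{stormy}, and then invokes Hjorth's storminess criterion \cite[Theorem 1.3]{Hj05}, which is calibrated exactly to essential countability; the dynamical verifications there rest on McShane's extension theorem for Lipschitz maps, with norming functionals supplying the required translations. You instead linearize: you embed $Z$ into $\RR^\NN$ by a linearly independent, separating, bounded sequence of functionals, let $Z$ act by translation, get Borelness of the coset relation of $\phi(Z)$ from Lusin--Souslin, and get non-essential countability from Hjorth's turbulence theory \cite{Hj00}. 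Your verification of the three turbulence clauses is sound --- in particular, the ``many small steps along a biorthogonal system'' argument for somewhere-density of local orbits works because a basic open $U$ constrains only finitely many coordinates while a biorthogonal system $z_1^*,\dots,z_m^*$ (which exists by linear independence of $f_1,\dots,f_m$) lets you steer the first $m$ coordinates independently and monotonically, keeping all intermediate points in $U$; just note that this must be carried out for \emph{every} $m$, not one fixed $N$, since density in the product topology requires matching arbitrarily many coordinates. As for what each approach buys: yours delivers a strictly stronger conclusion, since turbulence rules out classifiability by countable structures, which in turn implies non-essential countability (via the standard fact, resting on Feldman--Moore and the realization of countable groups as closed subgroups of $S_\infty$, that every countable Borel equivalence relation is classifiable by countable structures --- this step deserves an explicit citation). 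Your construction is also close in spirit to the referee's alternative proof sketched in part (B) of Section 3, which likewise embeds $Z$ into $\RR^\NN$ via norming functionals, but that argument then turns measure-theoretic (a free measure-preserving translation action on the compact group $(T^2)^\NN$, concluded by \cref{21}), whereas yours keeps the embedding idea purely topological.
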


While the proof of Theorem \ref{T:1} involves techniques from  measurable dynamics, the proof of Theorem \ref{T:2} relies only on Hjorth's notion of \emph{storminess} \cite{Hj05} and it is purely topological.  It is natural to ask whether a purely topological argument exists for Theorem \ref{T:1}. In Section \ref{S:Bernoulli} we provide such an argument for the subclass of all non-archimedean Polish groups. In fact this argument uses a new criterion for showing that an orbit equivalence relation is not essentially countable, developed in 
 Section \ref{S:Game}. As in the case of Hjorth's notion of storminess, this criterion involves only the topological aspects of the action. Moreover, 
its proof is elementary and it is based  on a lemma used in the theory of turbulence \cite{Hj00} as well as in \cite{LP18}.  We apply this criterion to a \emph{Bernoulli shift} type of action that can be associated to every non-archimedean Polish group. We show how these Bernoulli shift actions reflect many other topological properties of such groups.
 
\subsubsection*{Acknowledgments}

We are grateful to  Anush Tserunyan, Forte Shinko and Todor Tsankov
for their active interest in this project and their helpful suggestions. We also thank the anonymous referee for many valuable detailed comments and for providing alternative arguments for some results below.

\section{Proof of \cref{T:1}}\label{S1}
{\bf (A)} We will show that every non-locally compact closed subgroup of the isometry group of a separable locally compact metric space admits a Borel action on a standard Borel space whose associated equivalence relation is not essentially countable.

The proof will make essential use of the following two results. The first gives a sufficient condition for a non-locally compact Polish group to have a Borel action on a standard Borel space with associated equivalence relation not essentially countable. It is a special case of \cite[Theorem A]{FR} (see also \cite[Theorem 1.6]{K92}):

\begin{thm}[{\cite[Theorem A]{FR}}]\label{21}
Let $G$ be a non-locally compact Polish group which has a free Borel action on a standard Borel space $X$ which admits an invariant probability Borel measure. Then $E_X^G$ is not essentially countable.
\end{thm}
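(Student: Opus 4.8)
The plan is to argue by contradiction: assuming that $E^G_X$ is essentially countable, I would try to manufacture a left-invariant, locally finite Borel measure on a neighborhood of the identity of $G$, and thereby conclude that $G$ is locally compact, contradicting the hypothesis. The first step is to convert essential countability into a geometric transversal. Since $E^G_X$ is essentially countable, it admits a Borel \emph{complete section} $S \subseteq X$ meeting every orbit and such that $E^G_X|_S$ is a countable Borel equivalence relation; by refining $S$ inside small orbit-neighborhoods (using freeness together with the local structure of the action) I would arrange $S$ to be \emph{lacunary}, i.e.\ there is a symmetric open neighborhood $U$ of $1_G$ with $g\cdot x \notin S$ for every $x \in S$ and every $g \in U \setminus \{1_G\}$. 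Freeness then makes the tube map $\phi\colon V \times S \to X$, $\phi(g,x) = g\cdot x$, injective for any symmetric open $V$ with $VV \subseteq U$: an equality $g_1 x_1 = g_2 x_2$ with $x_i \in S$ forces $g_2^{-1} g_1 \in U$, hence $x_1 = x_2$ by lacunarity and $g_1 = g_2$ by freeness.

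The second step is to push the invariant measure through this tube. Writing $A = \phi(V \times S) = V\cdot S$, the injectivity of $\phi$ lets me define a finite Borel measure on $V$ by $\nu(C) = \mu(\phi(C \times S))$ for Borel $C \subseteq V$. Because $\mu$ is $G$-invariant and $\phi(gC \times S) = g \cdot \phi(C \times S)$, the measure $\nu$ is \emph{left translation invariant} on $V$ (for translations keeping $C$ inside $V$), has total mass $\mu(A)\le 1$, and is positive on nonempty open subsets of $V$ once one knows $\mu(A) > 0$. A nontrivial, finite, left-invariant Borel measure that is positive on opens and defined on a neighborhood of the identity is a local Haar measure; by a converse of Weil's theorem on the existence of Haar measure, such a measure can exist only when that neighborhood is precompact, so $V$ is relatively compact and $G$ is locally compact, the desired contradiction.

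The two places where I expect the real difficulty to concentrate are exactly the two inputs flagged above. First, one must guarantee that the tube $A = V\cdot S$ carries positive measure, equivalently that the transverse measure induced on the countable section $S$ is nontrivial; this is the step that genuinely uses the interplay between the \emph{countability} of $S$ along orbits and the finiteness of $\mu$, and it is where essential countability does its work rather than mere non-compactness. Second, and most delicate, is the converse-Haar step that extracts local compactness from a finite left-invariant measure on a neighborhood: a priori a non-locally-compact $G$ admits no Haar measure at all, so the whole argument is really a mechanism for building one out of the dynamics, and carrying this out rigorously (controlling translates and the behavior of $V$ under $\nu$) is the heart of the matter. This is also the structural reason the correct hypothesis is non-\emph{local}-compactness rather than non-compactness: non-local-compactness is precisely the obstruction forbidding any such local Haar measure, whereas many non-compact but locally compact groups, already $\bbZ$ and $\bbR$, do admit free measure-preserving actions with invariant probability measure whose orbit equivalence relations are essentially countable.
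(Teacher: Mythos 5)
The paper does not actually prove \cref{21}: it quotes it as a known result of Feldman--Ramsey \cite{FR} (cf.\ also \cite{K92}). Your architecture --- lacunary section $\to$ injective tube $V\times S\to X$ $\to$ locally left-invariant measure $\nu(C)=\mu(C\cdot S)$ on a neighborhood $\to$ converse-Haar $\to$ local compactness, contradiction --- is indeed the architecture behind that theorem. But there is a genuine gap at the decisive step, namely the clause ``by refining $S$ inside small orbit-neighborhoods \dots\ I would arrange $S$ to be lacunary.'' This is not a refinement; it is the theorem. Concretely: by freeness the trace $D_x=\{g\in G: g\cdot x\in S\}$ of the countable section on an orbit is a countable subset of $G$, but nothing prevents $D_x$ from accumulating at $1_G$ (the set $[x]\cap S$ can be dense-in-itself inside the orbit), in which case \emph{no} neighborhood $U$ is lacunary at $x$. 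To fix this one must discard points of $S$, in a Borel way, simultaneously across a non-null family of orbits, so as to obtain a single uniform $U$ while keeping the saturation of what remains of positive measure --- and no soft local surgery does this. The cleanest way to see that all the content of \cref{21} is concentrated here: without lacunarity, the natural transverse measure $B\mapsto \int_X |D_x\cap B|\,d\mu(x)$ on $G$ is invariant but need not be $\sigma$-finite, and invariance alone is worthless (counting measure on any Polish group is invariant); $\sigma$-finiteness is exactly an almost-everywhere local finiteness of the sets $D_x$, i.e., the lacunarity you assumed. Relatedly, for locally compact $G$ lacunary sections are constructed in \cite{K92} \emph{using} local compactness, and in the purely Borel setting even extracting a $\sigma$-lacunary decomposition from essential countability is a substantial theorem; a step whose success instantly yields the full theorem cannot be waved through.

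You also mislocate the difficulty. The two points you flag as ``where the real difficulty concentrates'' are in fact the routine ones. Positivity of $\mu(V\cdot S)$ uses nothing about countability of $S$ along orbits: by separability write $G=\bigcup_n g_nV$, so $X=G\cdot S=\bigcup_n g_n(V\cdot S)$ up to a null set, whence $\mu(V\cdot S)>0$ by invariance of $\mu$. And the converse-Haar step is a citable classical fact (Weil, Mackey: a Polish group carrying a nonzero $\sigma$-finite left-invariant Borel measure is locally compact), once one performs the standard patching of your locally defined $\nu$ --- which is invariant under \emph{every} $k\in G$ with $C, kC\subseteq V$, since $\mu(k\cdot(C\cdot S))=\mu(C\cdot S)$ --- into a global $\sigma$-finite left-invariant measure via the cover $G=\bigcup_n g_nV$. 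Two smaller gaps: your opening claim that essential countability yields a Borel complete section $S$ with $E^G_X|_S$ countable itself requires an argument (in the measured setting it follows from Jankov--von Neumann uniformization applied to the flipped graph of the reduction, after discarding an invariant null set); and your injectivity computation should note that with your strong form of lacunarity the equality $(g_2^{-1}g_1)\cdot x_1=x_2\in S$ with $g_2^{-1}g_1\in U$ forces $g_2^{-1}g_1=1_G$ directly. In summary: right skeleton, matching the Feldman--Ramsey strategy the paper cites, but the one step you treat as cosmetic is precisely where \cite{FR} do their work, and as written the proposal does not prove the theorem.
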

Recall that an action of a group $G$ on a set $X$ is {\bf free} if for every $g\not=1$ and every $x$, $g\cdot x \not= x$. Also for any action of $G$ on $X$ we denote by $F(X)$ its {\bf free part} defined by:
\[
x \in F(X) \iff \forall g\not= 1(g\cdot x \not= x).
\]
This is a $G$-invariant subset of $X$ on which $G$ acts freely.

The second result is a structure theorem for isometry groups of separable locally compact metric spaces. Below $S_\infty$ is the Polish group of all permutations of $\bbN$. For any group $G$, $S_\infty$ acts on the product group $G^\bbN$ by $\sigma \cdot (g_n) = (g_{\sigma^{-1}(n)})$ and we denote by $S_\infty\ltimes G^\bbN$ the semidirect product with multiplication given by $(\sigma, (g_n))(\tau, (h_n)) = (\sigma\tau, (g_n) (\sigma\cdot (h_n))$.

\begin{thm}[{\cite[Chapter 6]{GK}}]
Up to topological group isomorphism, the isometry groups of separable locally compact metric spaces are exactly the closed subgroups of groups of the form
\[
\prod_n(S_\infty \ltimes K_n^\bbN),
\]
where each $K_n$ is a Polish locally compact group.
\end{thm}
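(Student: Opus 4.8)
The plan is to establish the two inclusions separately. For the \emph{realization} direction one must show that every closed subgroup of a product $\prod_n(S_\infty\ltimes K_n^\bbN)$ arises as $\mathrm{Iso}(M)$ for a separable locally compact $M$; since \cite[Theorem 6.3]{GK} guarantees that closed subgroups of such isometry groups are again of this form, it suffices to realize the full product itself. I would do this in three layers. First, realize each locally compact Polish group $K_n$ as the isometry group of a separable \emph{proper} metric space (closed balls compact): such groups admit proper compatible left-invariant metrics, and one decorates the left-translation action so as to kill all extraneous isometries. Second, realize $S_\infty\ltimes K_n^\bbN$ as the isometry group of the disjoint union of countably many uniformly separated isometric copies of that space; separability and local compactness are inherited, and the only isometries permute the identical copies and act within each by an element of $K_n$, which is precisely the permutational wreath product. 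Third, take the disjoint union over $n$ of these pieces, marked so that no isometry can mix distinct values of $n$; the result is separable and locally compact with isometry group the full product.

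The \emph{structural} direction is the heart of the matter. Fix a separable locally compact $(M,d)$ and put $G=\mathrm{Iso}(M)$. The idea is to decompose $M$ by \emph{scale}: for each $k$ let $\mathcal C_k$ be the family of connected components of the graph on $M$ whose edges are the pairs $\{x,y\}$ with $0<d(x,y)<2^{-k}$. Each component is open, so second countability of $M$ makes $\mathcal C_k$ countable, and these families refine as $k$ grows. Since isometries preserve the relation $d(\cdot,\cdot)<2^{-k}$, the group $G$ permutes each countable set $\mathcal C_k$. Partitioning $\mathcal C_k$ into its (countably many) isometry types, the setwise stabilizer of a type permutes its identical members — giving a copy of $S_\infty$ (or a finite symmetric group) — while also acting internally on each member by an isometry of that single component. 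The crucial point, and the only place where local compactness of $M$ is genuinely used, is that each such local isometry group is \emph{locally compact}: point stabilizers are compact, since an isometry fixing a point is controlled by its action on the compact small balls around that point and, by Arzel\`a--Ascoli, such isometries form a compact group. Writing $K_n$ for these locally compact local groups and reindexing the pairs (scale, isometry type) by a single $n\in\bbN$, one assembles a continuous homomorphism $\Phi\colon G\to\prod_n(S_\infty\ltimes K_n^\bbN)$.

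It remains to verify that $\Phi$ is injective with closed image, so that $G$ is topologically isomorphic to a closed subgroup of the product. Injectivity is immediate once one observes that the internal coordinates faithfully record the action of an isometry on each piece, and that the pieces exhaust $M$; hence any element of the kernel fixes every point. For closedness one identifies the image with the set of \emph{coherent} tuples — those families of permutations and local isometries that fit together along the refinement of scales into a single isometry of $M$ — and shows this set is closed: given a convergent sequence of coherent tuples, the local Arzel\`a--Ascoli compactness lets one extract, piece by piece, a limiting isometry of $M$ realizing the limit tuple, so the limit is again coherent.

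The main obstacle is the structural direction, and within it two interlocking difficulties. The first is organizing the scale decomposition so that each level contributes an honest wreath factor with a \emph{locally compact} group $K_n$; this is exactly the step that converts local compactness of $M$, via Arzel\`a--Ascoli, into compactness of the local point stabilizers, and it must be carried out uniformly across all scales. The second is showing that the assembled map $\Phi$ has \emph{closed} range rather than merely dense range, which requires matching the topology of $G$ with the product topology and controlling the interaction between the continuous $K_n$-directions and the combinatorial $S_\infty$-directions. By contrast, the countability of the factors is cheap, following directly from second countability of $M$.
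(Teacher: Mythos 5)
The paper itself contains no proof of this statement---it is quoted directly from \cite[Chapter 6]{GK}---so your proposal can only be measured against the Gao--Kechris argument. Measured that way, your structural direction has a genuine gap, located exactly at the step you call crucial. Chain-components at a \emph{fixed} scale $2^{-k}$ need not have locally compact isometry groups, and your Arzel\`a--Ascoli justification (``an isometry fixing a point is controlled by its action on the compact small balls around that point'') is false: control on a small compact ball says nothing about the isometry far away unless it can be propagated along chains whose steps are small \emph{relative to the radius of compactness at each point}, and that radius may shrink to $0$ along a chain whose steps have the fixed size $2^{-k}$. Concretely, fix a large integer $T$, put $o_i = e_i/\sqrt{2} \in \ell^2$ (so $\|o_i-o_j\|=1$ for $i\neq j$), and let $M$ consist of the points $o_i$ together with all points $(1-k/T)\,o_i + (k/T)\,o_j$ for $1 \le k \le T-1$, $i \neq j$. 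Then $M$ is countable and $1/T$-uniformly discrete, hence separable, closed in $\ell^2$, and locally compact; and $M$ is a \emph{single} chain-component at every scale $2^{-k} > 1/T$. But every permutation of $\bbN$ induces an isometry of $M$ (permute the $o_i$ linearly and map the connecting points accordingly), so every basic neighborhood of the identity in $\mathrm{Iso}(M)$---the pointwise stabilizer of a finite set---contains isometries moving some $o_j$ to points pairwise distance $1$ apart, with no convergent subsequence. Hence point stabilizers are not compact and $\mathrm{Iso}(M)$ is not locally compact; for this $M$ your ``$K_n$'' at the coarse scales is $\mathrm{Iso}(M)$ itself, and $\Phi$ does not land in a product of the required form. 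This is precisely why Gao--Kechris work with \emph{pseudo-components}: chains whose step at a point $x$ is bounded by a fraction of $\rho(x) = \sup\{r : \overline{B}(x,r) \text{ is compact}\}$, a function that is either identically $\infty$ or finite and $1$-Lipschitz. Only with this point-dependent scale does the van Dantzig--van der Waerden propagation argument go through; in the example above the pseudo-components are singletons, consistent with $\mathrm{Iso}(M)$ being a closed subgroup of $S_\infty$.

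The realization direction is also not sound as written. A ``disjoint union of countably many uniformly separated isometric copies'' of a proper space $X$ is not a metric space unless $X$ is bounded: if all cross-distances lie below a constant while distances inside one copy are unbounded (as they must be when $X$ is proper and non-compact), the triangle inequality fails; and truncating the metric to force boundedness can enlarge the isometry group, so this cannot be waved away. Gao--Kechris handle both the gluing and the ``marking'' that kills unwanted isometries by explicit constructions. Finally, importing \cite[Theorem 6.3]{GK} to dispose of closed subgroups concedes a substantial portion of the very statement being proved; in the source, that reduction is itself one of the main results of the chapter, not an available black box.
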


\medskip
{\bf (B)} Let now $(X,\mu)$ be standard non-atomic probability space and denote by ${\rm Aut}(X,\mu)$ the Polish group of all automorphisms of $(X,\mu)$ with the weak topology. If $G$ is a closed subgroup of ${\rm Aut}(X,\mu)$, then the identity gives a Boolean action of $G$ on $(X,\mu)$. We say that $G$ is {\bf spatial} if this action has a spatial realization, i.e., it comes from a Borel action of $G$ on $X$ which preserves $\mu$. Note that not every closed subgroup of  ${\rm Aut}(X,\mu)$, including   ${\rm Aut}(X,\mu)$ itself, is spatial.
For more information about Boolean actions and spatial realizations, see \cite{GTW}, \cite{GW}.

We now have the following  result:
\begin{thm}\label{23}
Let $(X,\mu)$ be standard non-atomic probability space and $G$ a spatial closed subgroup of ${\rm Aut}(X,\mu)$ and denote by $g\cdot x$ the corresponding action of $G$ on $X$. Consider the diagonal action of $G$ on $X^\bbN$ given by $g\cdot (x_n) = (g\cdot x_n)$, which preserves the product measure $\mu^\bbN$. Then there is a $G$-invariant Borel set $Y\subseteq X^\bbN$with $\mu^\bbN(Y)=1$ such that $G$ acts freely on $Y$. In particular, if $G$ is non-locally compact, $E_Y^G$ is not essentially countable.
\end{thm}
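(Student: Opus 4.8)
The plan is to prove the displayed statement by showing that the free part $F(X^\bbN)$ of the diagonal action has full $\mu^\bbN$-measure, and then feeding this into \cref{21}. The naive route---note that each individual $g\neq 1$ fixes only a $\mu^\bbN$-null set of sequences, since $\mu(\operatorname{Fix}(g))<1$ gives $\mu^\bbN(\operatorname{Fix}(g)^\bbN)=\prod_n\mu(\operatorname{Fix}(g))=0$, and then take a union over $g$---fails at once, because $G$ is uncountable and one cannot sum uncountably many null sets. The idea that removes this obstacle is to pass to a \emph{continuous} model of the action, in which each $\operatorname{Fix}(g)$ is \emph{closed}, and to use that a $\mu^\bbN$-random sequence is almost surely dense in $\operatorname{supp}(\mu)$: density lets a single countable family of conditions replace the uncountable quantifier over $g$, and closedness of $\operatorname{Fix}(g)$ then upgrades ``fixes a dense set'' to ``fixes all of $\operatorname{supp}(\mu)$'', which by faithfulness can only happen for $g=1$.

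Concretely, I would first invoke \cite[Theorem 5.2.1]{BK} to replace $X$ by a Borel-isomorphic Polish space on which $G$ acts continuously, transporting $\mu$ to a $G$-invariant Borel probability measure, still written $\mu$; since this is an isomorphism of $G$-spaces it changes neither $E^G_{X^\bbN}$ nor any of the relevant measures. After this reduction each $g\in G$ acts as a homeomorphism of $X$, so $\operatorname{Fix}(g)=\{x:g\cdot x=x\}$ is closed and $\operatorname{supp}(\mu)$ is a well-defined closed $G$-invariant set of full measure. Moreover, spatiality of $G\leq\operatorname{Aut}(X,\mu)$ means the action is faithful in the measure-theoretic sense: distinct elements induce distinct measure automorphisms, so for every $g\neq 1$ the moving set $A_g=\{x:g\cdot x\neq x\}$ has $\mu(A_g)>0$, a property preserved under the Borel isomorphism.

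Next I would set $T=\{(x_n)\in X^\bbN: \operatorname{supp}(\mu)\subseteq\overline{\{x_n:n\in\bbN\}}\}$. Fixing a countable base $\{O_k\}$ for $X$ and noting that an open set meets $\operatorname{supp}(\mu)$ exactly when it has positive measure, one has $(x_n)\in T$ iff $x_n\in O_k$ for some $n$ whenever $\mu(O_k)>0$; hence $T$ is Borel. It is $G$-invariant, because each $g$ is a homeomorphism with $g(\operatorname{supp}(\mu))=\operatorname{supp}(\mu)$, so $\operatorname{supp}(\mu)=g(\operatorname{supp}(\mu))\subseteq g(\overline{\{x_n\}})=\overline{\{g\cdot x_n\}}$. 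And it is conull, since for each $k$ with $\mu(O_k)>0$ the event that no $x_n$ lands in $O_k$ has probability $\prod_n(1-\mu(O_k))=0$, and there are only countably many such $k$.

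Finally I would verify $T\subseteq F(X^\bbN)$, which is the heart of the matter. Suppose $(x_n)\in T$ and $g\cdot x_n=x_n$ for all $n$. Then $\{x_n\}\subseteq\operatorname{Fix}(g)$, and since $\operatorname{Fix}(g)$ is closed it contains $\overline{\{x_n\}}\supseteq\operatorname{supp}(\mu)$; thus $A_g\subseteq X\setminus\operatorname{supp}(\mu)$ is $\mu$-null, and faithfulness forces $g=1$. Hence no nontrivial $g$ fixes every coordinate of $(x_n)$, i.e.\ $(x_n)\in F(X^\bbN)$. Taking $Y=T$ then gives a $G$-invariant Borel set with $\mu^\bbN(Y)=1$ on which $G$ acts freely; when $G$ is non-locally compact, $\mu^\bbN|_Y$ is a $G$-invariant Borel probability measure for this free Borel action, so \cref{21} yields that $E^G_Y$ is not essentially countable. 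The only point I expect to require care is the passage to a continuous realization while keeping the invariant measure and the measure-theoretic meaning of faithfulness intact; once that is in place, the density-plus-closedness argument is short and elementary.
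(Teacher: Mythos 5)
Your proof is correct, but it takes a genuinely different route from the paper's. The paper also begins by passing to a continuous realization via \cite[Theorem 5.2.1]{BK} and using spatiality to get $\mu(A_g)>0$ for every $g\neq 1$, but from there it argues on the group side: for each $g\neq 1$, joint continuity yields a neighborhood $N_g\ni g$ with $1\notin N_g$ and an open $V_g$ with $\mu(V_g)>0$ and $N_g\cdot V_g\cap V_g=\emptyset$; by Lindel\"of, countably many $N_{g_k}$ cover $G\setminus\{1\}$, and any sequence fixed by some $g\in N_{g_k}$ must avoid $V_{g_k}$ in every coordinate, so the complement of the free part is contained in a countable union of sets of the form $(X\setminus V_{g_k})^\bbN$, each of measure zero. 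The price of that argument is that the free part $F(X^\bbN)$ is only co-analytic, so the paper needs a further inductive step --- alternately saturating under $G$ and applying the separation theorem for analytic sets --- to extract a Borel $G$-invariant conull $Y\subseteq F(X^\bbN)$. Your space-side argument replaces the Lindel\"of covering of the group by the countably many conditions ``hit each positive-measure basic open set,'' and the set $T$ of sequences dense in $\operatorname{supp}(\mu)$ is Borel (indeed $G_\delta$) and $G$-invariant by construction, so the separation-theorem step disappears entirely; closedness of $\operatorname{Fix}(g)$ plus faithfulness then places $T$ inside the free part. Both proofs rest on the same three pillars --- the continuous realization, $\mu(A_g)>0$ from spatiality, and the computation $\prod_n(1-\varepsilon)=0$ --- but yours trades the paper's local separation of group elements from the identity for the global support/density observation, yielding an explicit invariant Borel witness and a somewhat tidier argument.
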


\begin{proof} 
By \cite[Theorem 5.2.1]{BK}, we can assume that $X$ is Polish and the action of $G$ on $X$ is continuous. For $g\in G\setminus \{1\}$, let 
\[
A_g = \{ x\in X \colon g\cdot x \not= x\},
\]
so that $\mu (A_g) > 0$. If $x\in A_g$, there is an open nbhd $N_g^x$ of $g$ with $1\notin N_g^x$ and an open nbhd $V_g^x$ of $x$ such that
\[
N_g^x\cdot V_g^x \cap V_g^x =\emptyset.
\]
So
 \[
A_g \subseteq \bigcup \{V_g^x\colon x\in A_g\},
\]
and thus there is a sequence $x_0, x_1, \dots \in A_g$ such that
\[
A_g \subseteq V_g^{x_0} \cup V_g^{x_1} \cup \cdots.
\]
Therefore there is an $n$ such that $\mu (V_g^{x_n}) >0$.

So we have shown that if $g\not= 1$, then there is an open nbhd $N_g$ of $g$ with $1\notin N_g$ and an open set $V_g$ such that $\mu (V_g) >0$ and $N_g\cdot V_g \cap V_g =\emptyset$. So $G\setminus\{1\} = \bigcup_{g\not= 1} N_g$ and therefore there is a sequence $g_0, g_1, \dots$ with $g_n\not= 1, \forall n$, and $G\setminus\{1\} = N_{g_0} \cup N_{g_1}\cdots$.

Consider now the action of $G$ on $(X^\bbN, \mu^\bbN)$ and its free part $F(X^\bbN)$, which is a co-analytic, therefore measurable set. We will check that it has $\mu^\bbN$-measure 1. Denote by $Z$ its complement, i.e., 
\[
Z = \{z\in X^\bbN \colon \exists g\not= 1 (g\cdot z = z)\}.
\]
We want to show that $\mu^\bbN(Z) =0$. Since $Z = \bigcup_k Z_k$, where
\[
Z_k = \{z\in X^\bbN \colon \exists g\in N_{g_k} (g\cdot z = z)\}
\]
it is enough to show that $\mu^\bbN(Z_k) =0, \forall k$. But if $g\in N_{g_k}$ and $g\cdot z = z$, then $z_n \notin V_{g_k}, \forall n$. i.e.,
\[
Z_k \subseteq (X\setminus V_{g_k})^\bbN,
\]
and, since $\mu (X\setminus V_{g_k}) <1$, $\mu^\bbN (Z_k) = 0$.

Finally we will find a Borel $G$-invariant set $Y\subseteq F(X^\bbN)$ with $\mu^\bbN (Y) =1$. For each $A\subseteq X^\bbN$, let $A^* = G\cdot A$ be its $G$-saturation. Define then inductively $Y_n\subseteq F(X^\bbN)$ as follows: $Y_0$ is a Borel subset of $F(X^\bbN)$ with $\mu^\bbN (Y_0) =1$; $Y_{n+1}$ is a Borel subset of $F(X^\bbN)$ containing $Y_n^*$; this exists by the Separation Theorem for analytic sets. Then $Y= \bigcup_n Y_n$ works.
\end{proof}

\medskip
{\bf (C)} To complete the proof of \cref{T:1}, it is thus enough to show that every isometry group of a separable locally compact metric space is (up to topological group isomorphism) a spatial closed subgroup of ${\rm Aut}(X,\mu)$. This follows from the next two results:

\begin{pro}
For each sequence $(K_n)$ of Polish locally compact groups, the group $\prod_n(S_\infty \ltimes K_n^\bbN)$ is (up to topological group isomorphism) a closed subgroup of ${\rm Aut}(X,\mu)$.
\end{pro}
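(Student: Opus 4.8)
The plan is to isolate a single ``base'' embedding and then to propagate it through two closure operations. Say that a Polish group $H$ is \emph{realizable} if it admits a measure-preserving action on some standard non-atomic probability space $(Z,\rho)$ that exhibits it as a closed subgroup of $\mathrm{Aut}(Z,\rho)$. I would first record that realizability is preserved under (a) countable products, realized coordinatewise on $(\prod_n Z_n,\ \bigotimes_n \rho_n)$, and (b) the operation $K\mapsto S_\infty\ltimes K^\bbN$. For (b), given a realizing action of $K$ on $(Z,\rho)$, I let $S_\infty\ltimes K^\bbN$ act on $(Z^\bbN,\rho^\bbN)$ by $\Phi(\sigma,(k_n))=M_{(k_n)}\,P_\sigma$, where $M_{(k_n)}$ is the coordinatewise action $(z_m)_m\mapsto(k_m\cdot z_m)_m$ and $P_\sigma$ is the permutation $(z_m)_m\mapsto(z_{\sigma^{-1}(m)})_m$; the relation $P_\sigma M_{(k_n)}P_\sigma^{-1}=M_{\sigma\cdot(k_n)}$ shows that $\Phi$ is a measure-preserving homomorphism. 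Granting that (a) and (b) preserve realizability, the proposition follows by applying (a) to the groups $S_\infty\ltimes K_n^\bbN$ and (b) to each $K_n$, reducing everything to the base case: every locally compact Polish group is realizable.

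For the base case I would use the Gaussian construction. Realifying the left regular representation of $K$ on $L^2(K)$ yields a faithful orthogonal representation $\pi$, and I let $\alpha\colon K\to\mathrm{Aut}(Z,\rho)$ be the associated Gaussian measure-preserving action on the standard non-atomic Gaussian space, whose Koopman representation restricts on the first chaos to $\pi$. The essential point is that $\lambda_K$ is a \emph{topological} embedding of $K$ into the unitary group with the strong operator topology: if $\lambda_K(k_j)\to I$ strongly, then testing against the indicator of a compact identity neighborhood forces $k_j\to 1$ (if $(k_j)$ left every compact set the translates would become asymptotically orthogonal, and otherwise one passes to a convergent subsequence and uses faithfulness). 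Since the topology of $\mathrm{Aut}(Z,\rho)$ is the one induced by its Koopman representation and the first chaos already recovers $\pi$, the map $\alpha$ is likewise a topological embedding. Hence $\alpha(K)$ is a locally compact subgroup of the Polish group $\mathrm{Aut}(Z,\rho)$, and a locally compact subgroup of a Hausdorff group is closed; so $K$ is realizable.

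The step I expect to be the main obstacle is verifying that the images in (a) and (b) are \emph{closed}, since $S_\infty\ltimes K^\bbN$ is not locally compact and the base-case argument does not apply. I would analyze the coordinate factors $\mathcal{B}_m$, the sub-$\sigma$-algebra of $Z^\bbN$ generated by the $m$-th coordinate. Each $\Phi(\sigma,(k_n))$ carries $\mathcal{B}_m$ onto $\mathcal{B}_{\sigma(m)}$, and the crux is to show that any weak limit $T$ of such maps still permutes the family $\{\mathcal{B}_m\}$. This follows from an independence (Kolmogorov $0$--$1$ law) argument: if along an approximating sequence $\sigma_j(m)$ escaped to infinity, then the image of a fixed $A\in\mathcal{B}_m$ with $\rho^\bbN(A)\in(0,1)$ would become independent of every finite set of coordinates, hence of a generating algebra, forcing $\rho^\bbN(TA)\in\{0,1\}$ and contradicting measure-preservation. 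Thus, after a diagonal passage to a subsequence, $\sigma_j\to\sigma$ pointwise for an injection $\sigma$; applying the same reasoning to $T^{-1}$ gives surjectivity, and on each factor the limit lands in the closed subgroup realizing $K$, so $T$ lies in the image.

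The same factor analysis, with $\sigma=\mathrm{id}$, handles closedness in (a), and, run in reverse, shows that $\Phi$ and the product embedding are homeomorphisms onto their images; in particular the case $K=\{1\}$ already yields the closed embedding of $S_\infty$ by coordinate permutations. Assembling the three steps completes the proof.
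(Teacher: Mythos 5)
Your proof is correct, but it takes a genuinely different route from the paper's. The paper does essentially no measure theory: it invokes \cite[Appendix E]{K11} to get that the unitary group $U(H)$ of the separable infinite-dimensional Hilbert space is a closed subgroup of ${\rm Aut}(X,\mu)$, and then performs all constructions inside $U(H)$ --- a locally compact $K$ embeds as a closed subgroup via its left regular representation on $L^2(K,\eta)$, and, writing $H=\bigoplus_n H_n$, the group $U(H)^\bbN$ is identified with the block stabilizer $\{T\in U(H): T(H_n)=H_n \text{ for all } n\}$ and $S_\infty\ltimes U(H)^\bbN$ with the group of unitaries permuting the blocks, so that $\prod_n(S_\infty\ltimes K_n^\bbN)$ sits as a closed subgroup of $U(H)^\bbN\leq U(H)$. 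You instead work directly in ${\rm Aut}(X,\mu)$: your base case uses the Gaussian action attached to the realified regular representation, with closedness of the image coming for free from the fact that a locally compact subgroup of a Hausdorff group is closed, and your two closure operations (countable products and $K\mapsto S_\infty\ltimes K^\bbN$) are verified by hand via the coordinate sub-$\sigma$-algebras $\mathcal{B}_m$ and an independence argument. The two proofs are structurally parallel, and your $0$--$1$-law argument ruling out $\sigma_j(m)\to\infty$ is precisely the measure-theoretic analogue of the orthogonality argument that the paper's final step tacitly requires (the paper only asserts that $\widehat{S}_\infty U(H)^\bbN$ is closed in $U(H)$); note also that your last step implicitly uses the easy but necessary observation that two automorphisms of the product measure algebra agreeing on every $\mathcal{B}_m$ agree everywhere, since the fixed sets of a measure-algebra automorphism form a complete sub-$\sigma$-algebra. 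What the paper's route buys is brevity --- inside $U(H)$ every closedness check is a one-liner --- at the price of citing the nontrivial embedding $U(H)\leq {\rm Aut}(X,\mu)$; what your route buys is self-containedness modulo standard facts about Gaussian actions, together with an explicit treatment of the convergence analysis that the paper suppresses.
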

\begin{proof}
Let $H$ be the infinite dimensional separable (complex) Hilbert space and $U(H)$ its unitary group (with the strong topology). By \cite[Appendix E]{K11}, $U(H)$ is a closed subgroup of ${\rm Aut}(X,\mu)$, so it is enough to show that the group $\prod_n(S_\infty \ltimes K_n^\bbN)$ is a closed subgroup of $U(H)$.

First note that if $K$ is a Polish locally compact group with Haar measure $\eta$, then by identifying each element of $K$ with the associated left-translation action on $L^2(K, \eta)$ (the regular representation), we see that $K$ is a closed subgroup of $U(H)$.

Next observe that $U(H)^\bbN$ is also a closed subgroup of $U(H)$. To see this, write $H = \bigoplus_n H_n$, where each $H_n$ is a copy of $H$
and note that $U(H)^\bbN$ can be identified with the closed subgroup of $U(H)$ consisting of all $T\in U(H)$ that satisfy $T(H_n) = H_n$, for all $n\in\mathbb{N}$. 

Finally, we show that $S_\infty \ltimes U(H)^\bbN$ is a closed subgroup of $U(H)$. Represent again $H$ as $H = \bigoplus_n H_n$ as above. Then identify any $g\in S_\infty$ with $T_g\in U(H)$, where $T_g$ sends $H_n$ to $H_{g(n)}$ by the identity. Call $\widehat{S}_\infty$ this copy of $S_\infty$ within the group $U(H)$. Then $S_\infty \ltimes U(H)^\bbN$ can be identified with the internal product $ \widehat{S}_\infty U(H)_\bbN$, which is a closed subgroup of $U(H)$.
\end{proof}

\begin{thm}[\cite{KS}]
Every probability measure preserving Boolean action of the isometry group of a separable locally compact metric space has a spatial realization.
\end{thm}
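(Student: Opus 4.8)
The plan is to reduce everything to the structure theorem of \cite{GK} quoted above and then to build spatial realizations out of those of the basic building blocks. Call a Polish group \emph{spatial} if every probability measure preserving Boolean action of it admits a spatial realization; the goal is to show that every $G=\mathrm{Iso}(M)$, with $M$ separable locally compact metric, is spatial. By the structure theorem, $G$ is topologically isomorphic to a closed subgroup of $H=\prod_n(S_\infty\ltimes K_n^\bbN)$ with each $K_n$ locally compact Polish. So it suffices to verify two things: that the class of spatial groups contains the basic pieces $S_\infty$ and the locally compact groups $K_n$, and that it is closed under the operations appearing in the expression for $H$ — countable products, the power construction $L\mapsto L^\bbN$, and the semidirect product $L\mapsto S_\infty\ltimes L^\bbN$ — as well as under passage to closed subgroups.

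For the basic pieces I would argue as follows. Locally compact second countable groups are spatial by classical point-realization results going back to Mackey: a measurable near-action of such a group on a standard probability space has a genuine Borel model, which handles each $K_n$. For $S_\infty$ — and more generally any non-archimedean Polish group — I would start from the trivial observation that every countable discrete group is spatial: a Boolean action assigns to each of its countably many elements a transformation well-defined mod null, and intersecting the countably many conull sets on which the identities $T_gT_h=T_{gh}$ hold produces a genuine action on a conull invariant Borel set. A non-archimedean group carries a neighborhood basis at the identity of open subgroups $V_0\supseteq V_1\supseteq\cdots$ with countable quotients $G/V_n$; exploiting this tower one selects representatives coherently across the countable levels and passes to the limit, the rigidity coming from the open subgroups being exactly what defeats the whirliness obstruction of \cite{GTW}, \cite{GW}.

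The closure properties are where the real work lies. For a countable product $\prod_n L_n$ of spatial groups, restricting a Boolean action to each factor $L_n\hookrightarrow\prod_m L_m$ yields spatial models $g\mapsto T^{(n)}_g$, and since the factors commute in the group they commute as Boolean transformations; the task is to choose the point realizations simultaneously so that the genuine transformations $T^{(n)}_g$ commute \emph{on the nose} and assemble into a single Borel action of the whole product. The power $L^\bbN$ is a special case, while the semidirect product $S_\infty\ltimes L^\bbN$ adds the requirement that the chosen realization of the $S_\infty$-factor permute the coordinate realizations of $L^\bbN$ literally rather than only mod null. Finally, to descend from $H$ to an arbitrary closed subgroup $G\le H$, I would co-induce a given Boolean $G$-action up to $H$ along the standard Borel $H$-space $H/G$, invoke spatiality of $H$ to obtain a point realization of the co-induced action, and then restrict back to $G$ and to a fiber to recover a spatial model of the original action — a step that requires care, since for non-locally-compact $H$ the coset space $H/G$ need not carry an invariant measure.

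The main obstacle, in every one of these closure steps, is \emph{coherence}. A Boolean action prescribes each group element only as an automorphism of the measure algebra, i.e. only up to a null set, and the defining identities, the commutation of the factors of a product, and the permutation relations of the semidirect product must all be upgraded from ``mod null'' to ``everywhere'' simultaneously for uncountably many group elements, while keeping everything Borel in the group variable. Controlling these null sets uniformly — so that the separately constructed realizations of the pieces glue into one genuine action — is the technical heart of the argument, and it is precisely here that the special topological structure of isometry groups of locally compact spaces (a basis of open subgroups sitting over a locally compact core) is genuinely exploited.
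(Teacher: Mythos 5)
First, a point of context: the paper itself gives no proof of this statement --- it is imported from Kwiatkowska--Solecki \cite{KS} and used as a black box (indeed, part (D) of Section 2 records a referee's alternative argument whose whole purpose is to \emph{avoid} invoking \cite{KS}, because its proof depends on the solution to Hilbert's 5th Problem). Measured against the published proof, your proposal has a genuine gap, and it sits exactly at the step you defer to the end. The Gao--Kechris structure theorem presents $G$ only as a \emph{closed subgroup} of $H=\prod_n(S_\infty\ltimes K_n^\bbN)$, so even a complete proof that $H$ itself is spatial would not finish the job: everything rests on descending to closed subgroups. But the property ``every probability measure preserving Boolean action has a spatial realization'' is not known to pass to closed subgroups, and the mechanism you propose is unavailable: co-induction along $H/G$ is a construction for discrete (and, with care, locally compact) groups, whereas for a non-locally-compact Polish $H$ there is no invariant measure on $H/G$ and no way to form a measure-theoretic product indexed by $H/G$, so there is no co-induced Boolean $H$-action to which spatiality of $H$ could even be applied. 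You flag this difficulty yourself, but flagging it does not repair it. Nor can one appeal to \cite[Theorem 6.3]{GK} (closed subgroups of such isometry groups are again such isometry groups) to close the loop: that theorem keeps you inside the \emph{class} of groups for which the statement is to be proved; it does not transfer the spatiality \emph{property} from $H$ down to $G$, and using it here is circular.

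The closure steps for countable products, powers $L^\bbN$, and semidirect products $S_\infty\ltimes L^\bbN$ are likewise only announced, not proved: you correctly identify the coherence problem (upgrading the group laws, the commutation relations, and the permutation relations from mod-null to everywhere, simultaneously and Borel-uniformly in uncountably many group elements) as the technical heart, but you give no argument that solves it, and there is no known general principle allowing separately chosen point realizations of commuting Boolean actions to be made to commute on the nose. Already $\mathbb{R}^\bbN$ --- a countable power of the Mackey-spatial group $\mathbb{R}$ --- is precisely a case covered by \cite{KS} and by neither Mackey nor Glasner--Weiss, so these closure claims carry essentially the full weight of the theorem. Your basic ingredients are sound and citable (Mackey's point-realization theorem for the locally compact $K_n$; Glasner--Weiss \cite{GW} for $S_\infty$ and, more generally, for non-archimedean groups --- a genuine theorem, not the one-paragraph tower argument you sketch), but the actual proof in \cite{KS} does not assemble them through abstract closure properties of a class of ``spatial groups'': it analyzes every group in the class directly, combining the Gao--Kechris structure theory with the structure theory of locally compact groups coming from the solution to Hilbert's 5th Problem, and that direct analysis is exactly what is missing from your outline.
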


\medskip
{\bf (D)} The referee has suggested an alternative, more direct proof of \cref{T:1} that avoids the use of the result in \cite{KS} (whose proof depends on the solution to the Hilbert 5th Problem). We next sketch this argument. First note that if $(G_n)$ is a sequence of Polish groups and $G_n$ admits a free Borel action on a standard Borel space $X_n$ that preserves a probability measure $\mu_n$, then the product action of $\prod_n G_n$ on $\prod_n X_n$ is free and preserves $\prod_n \mu_n$. So by \cref{21} and the argument in the proof of \cref{23}, it is enough to show that any group of the form $S_\infty \ltimes K^\bbN$, where $K$ is a Polish locally compact group, admits a Borel action on a standard Borel space with invariant Borel probability measure for which the free part of the action has measure 1. For that use the result of Golodets and Sinel'shchikov \cite{GS} (see also \cite[Section 1]{AEG}) that $K$ has a free Borel action on a standard Borel space $X$ which admits an invariant probability Borel measure $\mu$. Let $\lambda$ be the Lebesgue measure on $[0,1]$ and define the action of $S_\infty \ltimes K^\bbN$ on $X^\bbN \times [0,1]^\bbN$ by 
\[
(\sigma, (g_n)) \cdot ((x_n), (y_n)) = ((g_n\cdot x_{\sigma^{-1}(n)}), (y_{\sigma^{-1} (n)})).
\]
This is a Borel action that preserves the product measure $\mu^\bbN \times \lambda^\bbN$ and its free part has measure 1.

\medskip
{\bf (E)} Finally we note that at the end of the paper we will give another proof of \cref{T:1} for non-archimedean Polish groups, using Bernoulli shifts and \cref{21}.

\section{Proof of \cref{T:2}}

{\bf (A)} Let $Z$ be a separable Banach space  viewed as a Polish group under addition.  We will use additive notation and we will
set $d$ to be the metric induced by the norm on $Z$: 
\[d(z,z')=||z-z'||.\]
Let $\mathrm{Lip}(Z)$ be the space of all 1-Lipschitz maps from $Z$ to $\mathbb{R}$. We endow $\mathrm{Lip}(Z)$ with the pointwise convergence topology and view it as a Polish $Z$-space by setting
 $(\zeta,f)\mapsto \zeta\cdot f$  with $(\zeta\cdot f)(z)=f(\zeta+z)$ for every $f\in \mathrm{Lip}(Z)$ and every $\zeta,z\in Z$. For every $z_1,\ldots,z_k\in Z$, every $\varepsilon>0$, and every  $f\in \mathrm{Lip}(Z)$ we will denote by $U_{z_1,\ldots,z_k,\varepsilon,f}$  the basic open subset $U$ of $\mathrm{Lip}(Z)$ defined by
 \[U=\big\{f' \in \mathrm{Lip}(Z) \mid |f(z_1)-f'(z_1)|<\varepsilon, \ldots, |f(z_k)-f'(z_k)|<\varepsilon\big\}.\]

In order to prove \cref{T:2} it suffices,  by \cite[Theorem 1.3]{Hj05}, to find a non-empty, $Z$-invariant, $G_{\delta}$ subset $X$ of $\mathrm{Lip}(Z)$ so that the $Z$-space $X$ is stormy. Recall that a Polish $G$-space $X$ is said to be {\bf stormy} \cite{Hj05}, if for every non-empty and open $V\subseteq G$ and every $x\in X$ we have that 
the map $g\mapsto g\cdot x$ from $V$ to $V x$ is not an open function. 

\begin{lem}\label{L:stormy}
Let $G$ be a Polish group acting continuously and freely on a Polish space $X$. Then the action is stormy if for every $x\in X$  and  every open $V\subseteq G$ with $1\in V$ there is an open $W\subseteq V$, $W\neq\emptyset,$ so that for all open $U\subseteq X$ with $x\in U$ there is $g\in V$ so that 
\begin{enumerate}
\item $gx\in U$;
\item $g\not\in W$.
\end{enumerate}
\end{lem}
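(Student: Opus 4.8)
The plan is to verify the definition of storminess directly: for every non-empty open $V \subseteq G$ and every $x \in X$, I must show that the orbit map $\phi_{V,x}\colon V \to Vx$, $g \mapsto g\cdot x$, is not open, where $Vx$ carries the subspace topology from $X$. Freeness of the action guarantees that $\phi_{V,x}$ is a continuous bijection onto $Vx$ (injectivity comes from $g\cdot x = h\cdot x \Rightarrow h^{-1}g \in \mathrm{stab}(x) = \{1\}$), so the task reduces to locating a single relatively open subset of $V$ whose image is not open in $Vx$.

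First I would reduce to the situation covered by the hypothesis, namely $1 \in V$. Given an arbitrary non-empty open $V$, fix $g_0 \in V$ and set $V_0 = g_0^{-1}V$, an open neighbourhood of $1$. Left translation $L_{g_0}\colon V_0 \to V$ is a homeomorphism, and the action map $y \mapsto g_0\cdot y$ is a homeomorphism of $X$ carrying $V_0 x$ onto $Vx$. Since $(g_0\cdot)\circ \phi_{V_0,x} = \phi_{V,x}\circ L_{g_0}$, the map $\phi_{V,x}$ is open if and only if $\phi_{V_0,x}$ is. Hence it suffices to establish non-openness when $1 \in V$.

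So assume $1 \in V$ and apply the hypothesis to obtain the non-empty open $W \subseteq V$, which, being the relevant neighbourhood of the identity, I take to contain $1$. I claim that $\phi_{V,x}(W) = Wx$ fails to be open in $Vx$, which already shows $\phi_{V,x}$ is not open. Indeed $x = 1\cdot x \in Wx$ since $1 \in W$. A neighbourhood basis of $x$ in $Vx$ is given by the sets $U \cap Vx$ with $U \subseteq X$ open and $x \in U$. For each such $U$ the hypothesis furnishes $g \in V$ with $g\cdot x \in U$ and $g \notin W$; by freeness the condition $g \notin W$ forces $g\cdot x \notin Wx$, so $g\cdot x \in (U \cap Vx)\setminus Wx$. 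Thus every neighbourhood of $x$ in $Vx$ meets the complement of $Wx$, so $x$ is not an interior point of $Wx$ and $Wx$ is not open, as required.

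The step I expect to be the crux is the conversion of the group-theoretic separation ``$g \notin W$'' into the orbit-theoretic separation ``$g\cdot x \notin Wx$'': this is exactly where freeness is indispensable, since a non-trivial stabiliser of $x$ could identify $g\cdot x$ with some $w\cdot x$, $w \in W$, and destroy the argument. A secondary point to handle with care is that $x$ is exhibited as a boundary point of $Wx$ only because $W$ is a neighbourhood of $1$ (so that $x \in Wx$); one should also confirm that $Vx$ always carries the subspace topology from $X$, so that the sets $U \cap Vx$ genuinely form a neighbourhood basis of $x$.
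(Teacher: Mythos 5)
Your argument is, in substance, the same as the paper's: reduce to the case $1\in V$ by a translation homeomorphism (you translate on the left, keeping the base point $x$; the paper translates on the right, which moves the base point to $g_0\cdot x$ --- immaterial, since the hypothesis quantifies over all $x$), then use freeness to turn condition (2), $g\notin W$, into the orbit-side condition $g\cdot x\notin Wx$, and conclude that the image $Wx$ of the open set $W$ is not open in $Vx$ because $x\in Wx$ fails to be an interior point. This is exactly the paper's two-line proof with the details written out.

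The one place where you depart from the stated hypothesis is the phrase ``which, being the relevant neighbourhood of the identity, I take to contain $1$.'' The lemma as printed only asserts the existence of a non-empty open $W\subseteq V$, and your argument genuinely needs $1\in W$: without it, $x$ need not lie in $Wx$ and your witness to non-openness disappears. You cannot simply ``take'' this, so formally there is a gap here --- but it is a gap in the statement, not in your reasoning. As literally printed, the lemma is false: for any free continuous action of a non-discrete Polish group one may choose $W=V\setminus\{1\}$ and respond with $g=1$ to every $U$, which satisfies (1) and (2), yet (for instance) the translation action of $\RR$ on itself is free, continuous, and not stormy. The intended reading is that $W$ is a neighbourhood of the identity; the paper's own proof requires this just as much (it is hidden in the unelaborated claim that the condition with ($2'$) implies storminess), and in the lemma's only application (Lemma \ref{L:conditionGdelta}) $W$ is indeed chosen to be the ball $B(0,r/2)$ around $0$. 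So your proof is correct for the statement as it is actually meant and used, and you have identified, rather than created, the missing hypothesis.
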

\begin{proof}
By translating the arbitrary open set $V$ from the definition to the identity using the right translation it is easy to see that storminess is implied by the condition above, after one replaces (2) with 
\begin{enumerate}
\item[($2'$)] $gx\not\in Wx$.
\end{enumerate}
But $(2)$ is equivalent to $(2')$, given that the action is free.
\end{proof}

\begin{lem}\label{L:actsfreely}
There is a  $Z$-invariant, dense and $G_{\delta}$ subset $X_0$ of $\mathrm{Lip}(Z)$ so that  $Z$ acts freely on $X_0$.
\end{lem}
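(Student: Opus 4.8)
The goal is to produce a $Z$-invariant, dense $G_\delta$ subset $X_0$ of $\mathrm{Lip}(Z)$ on which $Z$ acts freely. Since the action is $(\zeta \cdot f)(z) = f(\zeta + z)$, freeness at $f$ means that for every $\zeta \neq 0$ there is some $z$ with $f(\zeta + z) \neq f(z)$; equivalently, $f$ is not invariant under translation by any nonzero vector. So the plan is to isolate the ``bad'' set of functions that ARE fixed by some nonzero translation and show it is contained in a meager, $Z$-invariant $F_\sigma$ set, then take $X_0$ to be the complement intersected with $\mathrm{Lip}(Z)$.

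First I would reduce freeness to a density/genericity statement. The stabilizer of $f$ is the additive subgroup $\{\zeta : \zeta \cdot f = f\}$ of $Z$; the free part is exactly where this subgroup is trivial. I want to write the complement of the free part as a countable union of closed (or at least Borel, genuinely meager) pieces. The natural move is to fix a countable dense set $\{\zeta_m\}$ of nonzero vectors, or better to exhaust $Z \setminus \{0\}$ by countably many sets bounded away from $0$: for rationals $q > 0$ let $B_q = \{\zeta : \|\zeta\| \geq q\}$, and consider $C_q = \{f \in \mathrm{Lip}(Z) : \exists \zeta \in B_q,\ \zeta \cdot f = f\}$. One checks these are closed in $\mathrm{Lip}(Z)$ (a limit of functions each fixed by a vector of norm $\geq q$ is fixed by a limiting vector, using that the norm-$\geq q$ condition is closed and a compactness/subsequence argument on the witnessing vectors — here the separability and a diagonal argument keep the witnesses controlled). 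The free part's complement is then $\bigcup_q C_q$, an $F_\sigma$ set, and I would set $X_0 = \mathrm{Lip}(Z) \setminus \bigcup_q C_q$, which is automatically $Z$-invariant since the stabilizer subgroup is conjugation-invariant (indeed the action is by a commutative group, so stabilizers are translation-invariant).

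The crux is showing each $C_q$ is nowhere dense, i.e.\ that $X_0$ is dense $G_\delta$. For this I would show that inside any basic open set $U_{z_1,\ldots,z_k,\varepsilon,f}$ there is a $1$-Lipschitz function fixed by no vector of norm $\geq q$, and in fact that such functions are dense. The concrete construction: given target values at the finitely many points $z_1,\ldots,z_k$, I would build a $1$-Lipschitz perturbation that breaks all large-translation symmetries — for instance by adding a small $1$-Lipschitz ``bump'' or a function like $z \mapsto \min(\delta, \mathrm{dist}(z, S))$ for a suitable finite or aperiodic set $S$, chosen so the resulting function has no nonzero period of norm $\geq q$. Since $Z$ is infinite dimensional, there is ample room to make a $1$-Lipschitz function aperiodic while respecting finitely many prescribed values up to $\varepsilon$; this is where infinite-dimensionality (or at least $\dim Z \geq 1$) is used.

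The main obstacle I anticipate is the density argument for breaking periodicity while staying $1$-Lipschitz and $\varepsilon$-close on the prescribed finite data: I need a single explicit perturbation that simultaneously destroys \emph{all} periods $\zeta$ with $\|\zeta\| \geq q$, not just one at a time. The clean way around this is to note that $C_q$ being closed reduces the task to showing $C_q$ has empty interior, so it suffices to perturb $f$ within $U_{z_1,\ldots,z_k,\varepsilon,f}$ to some $f'$ that is fixed by \emph{no} $\zeta$ with $\|\zeta\|\geq q$ — and by closedness of $C_q$ I even get an open neighborhood of $f'$ avoiding $C_q$. Constructing one such aperiodic $f'$ is easier than a full density sweep: I would take $f$ and add a small-amplitude $1$-Lipschitz function supported near a single point (scaled to keep the sum $1$-Lipschitz, e.g.\ by averaging the Lipschitz constants), which forces any period to move that localized feature, and a feature supported on a bounded region cannot be preserved by a translation of norm $\geq q$ unless it reappears, a recurrence one rules out by making the feature the unique global maximum. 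I would make this precise as the last step.
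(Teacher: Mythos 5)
There is a genuine gap, and it sits exactly at the step you flag as the crux: the claim that $C_q=\{f\in\mathrm{Lip}(Z):\exists \zeta,\ \|\zeta\|\geq q,\ \zeta\cdot f=f\}$ is closed is false, and no compactness/diagonal argument can rescue it, because in an infinite-dimensional Banach space the set $\{\zeta:\|\zeta\|\geq q\}$ is unbounded and not even its bounded pieces are compact, so the witnessing periods of a pointwise-convergent sequence need not subconverge. In fact the situation is much worse: $C_q$ is \emph{dense} in $\mathrm{Lip}(Z)$. A basic open set $U_{w_1,\dots,w_m,\delta,f}$ only constrains finitely many values, and if you pick any $\zeta$ with $\|\zeta\|\geq q$ and $\|\zeta\|>\max_{i,j}\bigl(\|w_i-w_j\|+|f(w_i)-f(w_j)|\bigr)$, then the partial map $w_i+k\zeta\mapsto f(w_i)$ (for $1\leq i\leq m$, $k\in\mathbb{Z}$) is $1$-Lipschitz, and
\[
g(z)\;=\;\inf_{i,k}\bigl(f(w_i)+\|z-w_i-k\zeta\|\bigr)
\]
is a $1$-Lipschitz function that agrees with $f$ at each $w_i$ and satisfies $\zeta\cdot g=g$. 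So every nonempty open subset of $\mathrm{Lip}(Z)$ meets $C_q$; consequently $C_q$ is not closed (it omits, e.g., the aperiodic function $\|\cdot\|$), $\overline{C_q}$ is all of $\mathrm{Lip}(Z)$ rather than nowhere dense, and your fallback plan --- perturb $f$ to an aperiodic $f'$ and then use closedness of $C_q$ to get an open neighborhood of $f'$ avoiding $C_q$ --- is impossible: no such neighborhood exists for any $f'$. (This does not contradict the lemma: the $C_q$ are still meager, but dense meager sets are not unions of closed nowhere dense sets in any useful way for your argument.) The same obstruction shows that merely producing one aperiodic function in each basic open set, as in your last paragraph, proves only density of the free part, not that it contains a dense $G_\delta$.

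The fix is to abandon the stratification by a lower bound on the norm of the period, since ``$f$ has no period of norm $\geq q$'' is never an open condition in the pointwise-convergence topology. The paper instead stratifies by a \emph{single candidate period}: for each $\zeta\neq 0$ it takes the quantitative set $X_\zeta=\{f:\exists z\ |f(\zeta+z)-f(z)|>\|\zeta\|/2\}$, which \emph{is} open (a strict inequality involving two coordinates), is shown dense by a McShane extension that forces a value jump of $\tfrac34\|\zeta\|$ across one far-away pair $z,z+\zeta$, and is clearly $Z$-invariant. Intersecting over a countable dense set $\mathcal{D}\subseteq Z\setminus\{0\}$ gives a dense $G_\delta$, and the slack $\|\zeta\|/2$ is what lets a triangle-inequality argument convert ``$f\in X_{\zeta'}$ for all $\zeta'\in\mathcal{D}$'' into ``$\zeta\cdot f\neq f$ for \emph{all} $\zeta\neq 0$'': if $\zeta\cdot f=f$ and $\zeta'\in\mathcal{D}$ is close enough to $\zeta$, then $|f(\zeta'+z)-f(z)|\leq\|\zeta'-\zeta\|\leq\|\zeta'\|/2$ for every $z$, contradicting $f\in X_{\zeta'}$. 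If you want to keep your outline, replace each $C_q$ by the closed nowhere dense sets $\mathrm{Lip}(Z)\setminus X_\zeta$, $\zeta\in\mathcal{D}$; that is essentially the paper's proof.
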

\begin{proof}
Let $\zeta$ be any element of $Z$ with $\zeta\neq 0$ and consider the set
\[X_{\zeta}=\{f\in \mathrm{Lip}(Z) \mid \; \exists z \in Z \; \; |f(\zeta+z)-f(z)|>||\zeta||/2   \}.\]
It is clear that $X_\zeta$ is open in $\mathrm{Lip}(Z)$. To see that $X_{\zeta}$ is also dense in $\mathrm{Lip}(Z)$ let $U:=U_{z_1,\ldots,z_k,\varepsilon,f}$ be a basic open subset of $\mathrm{Lip}(Z)$  and let $r>0$ be any number with $||z_1||<r,\ldots, ||z_k||<r$. Let also $[a,b]\subseteq \mathbb{R}$ be any interval of length at most $2r$ containing the values $f(z_1),\ldots,f(z_k)$. Set $z$ to be any positive multiple of $\zeta$ with $||z||>3r+2||\zeta||$. The  assignment 
\[z_1\mapsto f(z_1), \; \ldots, \; z_k\mapsto f(z_k), \;  z\mapsto b,  \; (z+\zeta)\mapsto b+3/4||\zeta|| \]
is clearly a 1-Lipschitz map from $\{z_1,\ldots,z_k, z,z+\zeta\}$ to $\mathbb{R}$ and therefore, by the classical theorem of McShane \cite{McS34}, it extends to some $f'\in \mathrm{Lip}(Z)$. Notice that any such $f'$  automatically lies in $X_{\zeta}\cap U$.

By a simple use of the triangle inequality it now follows that $\bigcap_{\zeta\in\mathcal{D}}X_{\zeta}$ is the desired set, where $\mathcal{D}$ is any countable dense subset of $Z\setminus\{0\}$.
\end{proof}

\begin{lem}\label{L:conditionGdelta}
There is a  $Z$-invariant, dense and $G_{\delta}$ subset $X_1$ of $\mathrm{Lip}(Z)$ so that for every open $V\subseteq Z$ which contains the identity and every $x\in X_1$ the condition stated in Lemma \ref{L:stormy} is satisfied.
\end{lem}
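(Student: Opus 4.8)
The plan is to realize $X_1$ as a countable intersection of translation-recurrence sets and to check, via the Baire category theorem, that this intersection is dense, $G_\delta$, $Z$-invariant, and forces the hypothesis of \cref{L:stormy}. Fix a countable dense set $D\subseteq Z$. For $m,n\in\NN$ and a finite $F=\{z_1,\dots,z_k\}\subseteq D$, let
\[
A_{m,F,n}=\Big\{f\in\mathrm{Lip}(Z) : \exists \zeta\in Z,\ \tfrac{1}{2m}\le\|\zeta\|<\tfrac1m,\ \max_{i\le k}|f(z_i+\zeta)-f(z_i)|<\tfrac1n\Big\}.
\]
Since $f\mapsto f(z)$ is continuous on $\mathrm{Lip}(Z)$ and $A_{m,F,n}$ is a union, over $\zeta$ in the closed annulus, of basic open conditions, each $A_{m,F,n}$ is open. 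I would then set $X_1=\bigcap_{m,n,F}A_{m,F,n}$, the intersection being over the countably many triples $(m,n,F)$.

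First I would verify that membership in $X_1$ yields the condition in \cref{L:stormy}. Given $f\in X_1$ and open $V\ni 0$, choose $m$ with $\{\|z\|<1/m\}\subseteq V$ and put $W=\{\|z\|<1/(2m)\}$, an open neighbourhood of $0$ inside $V$. For an arbitrary basic neighbourhood $U_{y_1,\dots,y_k,\varepsilon,f}$ of $f$, pick $z_i\in D$ with $\|y_i-z_i\|<\varepsilon/4$ and $n$ with $1/n<\varepsilon/2$; applying $f\in A_{m,\{z_i\},n}$ produces $\zeta$ with $\|\zeta\|\in[\tfrac{1}{2m},\tfrac1m)$ and, by the triangle inequality and $1$-Lipschitzness, $|f(y_i+\zeta)-f(y_i)|<\varepsilon$ for all $i$. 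Thus $\zeta\cdot f\in U_{y_1,\dots,y_k,\varepsilon,f}$ while $\zeta\in V\setminus W$, which is exactly $(1)$--$(2)$ of \cref{L:stormy}. The same Lipschitz estimate shows that $X_1$ coincides with the a priori smaller intersection $\bigcap_{m,n}\bigcap_{F\subseteq Z}A_{m,F,n}$ taken over \emph{all} finite $F\subseteq Z$; since translating $f$ merely reindexes the family $\{F\}$, this intersection is manifestly $Z$-invariant, and hence so is $X_1$.

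The crux is the density of each $A_{m,F,n}$, and this is where infinite-dimensionality of $Z$ enters. Fix a basic open set $U_{w_1,\dots,w_l,\delta,g}$; I must find $f$ in it admitting a recurrence $\zeta$ of norm in $[\tfrac1{2m},\tfrac1m)$ with $f(z_i+\zeta)=f(z_i)$. The naive attempt $f=g$ fails precisely when $g$ genuinely rises by $\approx\|\zeta\|$ in every available direction (e.g.\ $g=\|\cdot-z_i\|$), and a purely local \emph{flattening} is obstructed because reducing a slope-$1$ rise to $0$ inside a thin region costs slope $>1$. I would instead choose the direction of $\zeta$ so that translating by it never decreases the distance from any $z_i$ to any other marked point. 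Concretely, for each nonzero difference $q=p-z_i$ with $p\in\{w_j\}\cup\{z_{i'} : i'\ne i\}$, fix a norming functional $\phi_q$ (so $\|\phi_q\|=1$ and $\phi_q(q)=\|q\|$) and, using that $\bigcap_q\ker\phi_q$ has finite codimension in the infinite-dimensional space $Z$, pick a unit vector $e\in\bigcap_q\ker\phi_q$. Setting $\zeta=te$ with a generic $t\in[\tfrac1{2m},\tfrac1m)$ (avoiding the finitely many $t$ causing coincidences among the points below), the inequality $\|te-q\|\ge\|q\|$ holds for every such $q$, i.e.\ $\|z_i+\zeta-p\|\ge\|z_i-p\|$ throughout.

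With this $\zeta$ in hand, I would define $f$ on the finite set $S=\{w_j\}\cup\{z_i\}\cup\{z_i+\zeta\}$ by $f=g$ on $\{w_j\}\cup\{z_i\}$ and $f(z_i+\zeta)=g(z_i)$. Every pairwise $1$-Lipschitz constraint on $S$ then holds: constraints inside $\{w_j\}\cup\{z_i\}$ and between $z_i+\zeta$ and $z_{i'}+\zeta$ reduce to the $1$-Lipschitzness of $g$, the constraint between $z_i$ and $z_i+\zeta$ is $0\le\|\zeta\|$, and the remaining constraints between $z_i+\zeta$ and a point $p$ follow from $|g(z_i)-g(p)|\le\|z_i-p\|\le\|z_i+\zeta-p\|$, which is exactly what the choice of $e$ secured. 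By McShane's theorem \cite{McS34}, $f$ extends to an element of $\mathrm{Lip}(Z)$; it lies in $U_{w_1,\dots,w_l,\delta,g}$ (it agrees with $g$ at each $w_j$) and in $A_{m,F,n}$ (the recurrence $\zeta$ is exact), proving density. Baire's theorem then shows $X_1=\bigcap_{m,n,F}A_{m,F,n}$ is a dense $G_\delta$; combined with the preceding paragraphs it is $Z$-invariant and satisfies the hypothesis of \cref{L:stormy} at every point, as required. The single genuinely non-trivial point — and the place where the hypothesis that $Z$ is infinite-dimensional is indispensable — is the selection of the recurrence direction $e$ in the common kernel of the norming functionals.
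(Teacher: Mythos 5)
Your proof is correct and takes essentially the same route as the paper's: the same countable family of open ``annulus-recurrence'' sets, density established by choosing the translation $\zeta$ in the intersection of kernels of norming functionals (the one place infinite-dimensionality is used) and extending a partial $1$-Lipschitz assignment via McShane's theorem, and then Baire category. If anything, your handling of $Z$-invariance---identifying $X_1$ with the intersection over \emph{all} finite $F\subseteq Z$ via the $1$-Lipschitz approximation estimate---is more explicit than the paper's, which leaves that point implicit.
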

\begin{proof}
It suffices to check the condition in Lemma \ref{L:stormy} for $V$ ranging over some basis of open neighborhoods of $0$ in $Z$. So let $V=B(0,r)$, be an open ball  of radius $r>0$ around $0$ in $Z$. Choose $W$ to be the open ball    $B(0,r/2)$  of radius $r/2$ around $0$ in $Z$. Let also $z_1,\ldots,z_k$ be any finite list of points from $Z$ and let $\varepsilon>0$. Consider the set  $\mathcal{S}_{r,z_1,\ldots,z_k,\varepsilon}$  consisting of all $f\in\mathrm{Lip}(Z)$ for which there is a $\zeta \in V\setminus W$ with $\zeta \cdot f \in U_{z_1,\ldots,z_k,\varepsilon,f}$. In other words, $\mathcal{S}_{r,z_1,\ldots,z_k,\varepsilon}$ is simply the set
\[\bigg\{f\in \mathrm{Lip}(Z) \mid  \; \exists \zeta \in V\setminus W \text{ such that } |f(\zeta+z_i)-f(z_i)|<\varepsilon \text{ for all } i  \bigg\}\]

{\it Claim.} $\mathcal{S}_{r,z_1,\ldots,z_k,\varepsilon}$ is open and dense in $\mathrm{Lip}(Z)$. 
\begin{proof}[Proof of Claim.]\renewcommand{\qedsymbol}{\openbox\rlap{\textsubscript{Claim}}}
The set $\mathcal{S}_{r,z_1,\ldots,z_k,\varepsilon}$ is clearly open. To see that it is also dense, let  $U_{y_1,\ldots,y_l, \delta, f}$ be any basic open subset of $\mathrm{Lip}(Z)$. We will show that 
\[U_{y_1,\ldots,y_l, \delta, f}\cap\mathcal{S}_{r,z_1,\ldots,z_k,\varepsilon}\neq \emptyset.\]
 By shrinking both sets, if necessary, we can assume without loss of generality that $k=l, z_1=y_1, \ldots, z_k=y_l, \varepsilon=\delta$. 
Notice that it suffices to find some $\zeta \in V\setminus W$ so that for all $i,j\leq k$ we have that $d(\zeta+z_i,z_j)\geq d(z_i,z_j)$ since, if this is the case,  the assignment:  
\[z_1\mapsto f(z_1), \ldots , z_k\mapsto f(z_k), \; (\zeta+z_1)\mapsto f(z_1), \ldots , (\zeta+z_k)\mapsto f(z_k)\] 
is 1-Lipschitz and it therefore extends by \cite{McS34} to some $f'\in \mathrm{Lip}(Z)$. Any such extension witnesses that the intersection above is indeed non-empty.

It is easy to see that the property which remains to be checked follows from the fact that for every finite  $A\subseteq Z$ there is a $\zeta\in V\setminus W$ with $d(\zeta,a)\geq d (0,a)$ for all $a\in A$. 
So let $a_1,\ldots,a_n$ be an enumeration of $A$ and for each $i\leq n$ let $a^{*}_i$ be a norming functional for $a_i$, i.e., an element of the dual of $Z$ with $||a^{*}_i||=1$ and $a^*_i(a_i)=||a_i||$. Let also $P_i=\{\eta\in Z\mid a^*_{i}(\eta)=0\}$. Since $||a^{*}_i||=1$, 
for every $\eta\in P_i$ we have that $d(\eta,a_i)\geq a^{*}_i(a_i-\eta)=a^{*}_i(a_i)= d(0,a_i)$. Since  $P=\bigcap_{i\leq n} P_i$ is a subspace of finite codimension and $Z$ is infinite dimensional, $P$ is unbounded. Taking any $\zeta\in P$ with $||\zeta||=3/4r$ finishes the proof.
\end{proof}
To finish the proof of the Lemma we can simply take $X_1$ to be the intersection of a countable collection of sets of the form $\mathcal{S}_{r,z_1,\ldots,z_k,\varepsilon}$, where $\{z_1,\ldots,z_k\}$ varies over all finite subsets of some fixed countable dense subset of $Z$, and $\varepsilon, r$ vary over a sequence of positive reals converging to $0$.
\end{proof}

We can now finish the proof of Theorem \ref{T:2}.
\begin{proof}[Proof of Theorem \ref{T:2}]
Let $X=X_0\cap X_1$ where $X_0$ and $X_1$ are the sets 
 described in Lemma \ref{L:actsfreely} and  Lemma \ref{L:conditionGdelta}, respectively. By Lemma \ref{L:stormy} the action of $Z$ on $X$ is stormy and therefore, by \cite{Hj05} the orbit equivalence relation $E^{Z}_{X}$ is not essentially countable. Finally the $E^Z_X$ is Borel since the action of $Z$ on $X$ is free.
\end{proof}

We record the following question, whose positive answer would  generalize Theorem \ref{T:2} to the class of all non-locally compact abelian Polish groups. 
\begin{question}
Let $G$ be a non-locally compact abelian Polish group. Does $G$ admit a two-sided invariant metric $d$ with the property: for every open $V\subseteq G$ with $1_G\in V$ there is an open $W\subseteq V$ with $1_G\in W$ so that for every finite $A\subseteq G$ there is $g\in V\setminus W$ with $d(g,a)\geq d(1_G,a)$, for every $a\in A$.
\end{question}

\medskip
{\bf (B)} The referee has suggested an alternative  proof of \cref{T:2} which relies on Theorem \ref{21}. We sketch here this argument. Let $D=\{d_n\mid n\in\mathbb{N}\}$ be a countable dense subset of  the Banach space $Z$ and let $D^{*}=\{d_n^* \mid
 n \in\mathbb{N}\}\subseteq Z^*$ be the collection of the associated norming functionals. Notice that the continuous group homomorphism $(Z,+)\to (\mathbb{R}^{\mathbb{N}},+)$,  given by $z\mapsto \big(d^*_n(z)\big)_n$, is injective, since $D$ is dense in $Z$. 
But $\mathbb{R}^{\mathbb{N}}$ is a subgroup of the compact group $K=(T^2)^{\mathbb{N}}$, where $T^2=S^1\times S^1$ is the $2$-dimensional torus. It follows that $\mathbb{R}^{\mathbb{N}}$, and therefore $Z$,  admit a free and probability measure preserving action by left translation on $K$---the latter is endowed with its normalized Haar measure. By Theorem \ref{21} we may now conclude Theorem \ref{T:2}.

\section{A game theoretic criterion for non-essential countability}\label{S:Game}

Let $X$ be a Polish $G$-space and let $x,y\in X$. Let also $V$ be an open neighborhood of $1$ in $G$.
We say that {\bf $y$ admits a $V$-approximation from $x$} if there is $g_{*}\in G$, and a sequence $(g_n)$ in $V$ so that $(g_ng_{*}x)$ converges to $y$ when $n\to\infty$. We denote this by  
\[x \succeq_V y.\]

The following criterion is the main result of this section.
 
\begin{thm}\label{Criterion}
Let $G$ be a Polish group and let  $X$ be a Polish $G$-space. Assume that for every open neighborhood of the identity $V$ of $G$, for every  $G$-invariant comeager subset $C$ of $X$ and for every non-meager subset $O$ of $X$ there are $c\in C$ and $o\in O$ so that $c \succeq_V o$ but $[c] \neq [o]$. Then $E^G_X$ is not essentially countable.
 \end{thm}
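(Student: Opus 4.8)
The plan is to prove the contrapositive, or rather to derive a contradiction from the assumption that $E^G_X$ is essentially countable. So suppose there is a Borel reduction $f\colon X \to Y$ from $E^G_X$ to a countable Borel equivalence relation $F$ on a standard Borel space $Y$. The strategy is to locate a single $F$-class (pulled back through $f$) that is simultaneously ``large'' enough to meet the comeager-invariant sets we control, yet ``small'' enough — being a countable union of $G$-orbits — that the approximation hypothesis $\succeq_V$ forces two inequivalent points into it, contradicting that $f$ is a reduction.

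**First I would** extract the combinatorial heart of countability. Since $F$ has countable classes, $X = \bigcup_n f^{-1}([y_n]_F)$ for suitable representatives, and each preimage $f^{-1}([y]_F)$ is a Borel $G$-invariant set that is a \emph{countable} union of $G$-orbits. By Baire category, among these invariant pieces some one, call it $B$, must be non-meager, hence comeager on some open set. I would then want to pass to a $G$-invariant comeager set $C$ on which the descriptive-set-theoretic behavior is controlled; here the \emph{key input} is the lemma from turbulence theory (the one the introduction promises will be used, cf. \cite{Hj00, LP18}) relating topological approximation along $\succeq_V$ to $F$-equivalence. Concretely, I would use that lemma to show: if $c$ lies in a comeager invariant set $C$ and $c \succeq_V o$ with $o$ ranging over a non-meager set $O$, then $f(o)\, F\, f(c)$ for the relevant points — i.e.\ the approximation relation is ``absorbed'' by the countable equivalence relation on a large set. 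The standard mechanism is that $g_* x$ and its $V$-translates, being a convergent sequence feeding into a single $F$-class via the local structure of the reduction, cannot escape $[f(c)]_F$.

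**The main step, and the main obstacle,** is precisely this absorption: showing that the topological relation $\succeq_V$ respects the countable equivalence relation $F$ on a comeager set. The difficulty is that $F$-classes of $f$-preimages are only countable unions of orbits, so a priori $c \succeq_V o$ with $[c]\neq[o]$ is perfectly compatible with $f(c)\,F\,f(o)$ — indeed that is exactly what we will exploit. The careful part is arranging the quantifiers: I must produce, from the reduction $f$, a $G$-invariant comeager set $C$ and identify $O$ as a non-meager set sitting inside a single $f^{-1}([y]_F)$, so that the hypothesis of the theorem applied to this particular $C$ and $O$ hands me $c\in C$, $o\in O$ with $c\succeq_V o$ and $[c]\neq[o]$. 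The turbulence-style lemma then forces $f(c)\,F\,f(o)$, while $[c]\neq[o]$ gives $\neg\,(f(c)\,F\,f(o))$ since $f$ is a reduction — the contradiction.

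**To finish,** I would assemble these pieces: start from the hypothetical reduction, decompose $X$ into countably many invariant Borel pieces via the countable classes of $F$, use Baire category to find a non-meager invariant piece $B = f^{-1}([y]_F)$ and set $O$ to be a non-meager Borel subset of $B$, take $C$ to be a $G$-invariant comeager set on which the approximation lemma applies (obtained by the usual intersection of countably many comeager sets, one per basic open neighborhood $V$ and per relevant Borel datum), and then invoke the theorem's hypothesis with a suitable $V$. The conflict between $[c]\neq[o]$ and the forced $F$-equivalence of $f(c),f(o)$ completes the argument. I expect that the only genuinely delicate point is verifying that the approximation lemma applies uniformly on a comeager set — everything else is bookkeeping with Baire category and the definition of Borel reduction.
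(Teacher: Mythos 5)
Your overall shape---derive a contradiction from a hypothetical Borel reduction $f$ of $E^G_X$ to a countable Borel equivalence relation $F$ by showing that, on suitably large sets, $c \succeq_V o$ forces $f(c)\,F\,f(o)$---is indeed the paper's strategy. But two of your steps fail. First, the decomposition $X = \bigcup_n f^{-1}([y_n]_F)$ into \emph{countably many} pieces is false: a reduction induces an injection of quotient spaces, so the range of $f$ meets exactly as many $F$-classes as there are $E^G_X$-classes, and this is uncountable in every case the theorem is meant to address (if there were only countably many orbits, $E^G_X$ would already be smooth, hence essentially countable). So no Baire-category argument produces a non-meager invariant piece $B = f^{-1}([y]_F)$; indeed each such preimage is a countable union of orbits, so a non-meager one would yield a non-meager orbit, whereas in the intended applications (e.g.\ the Bernoulli shift) all orbits and all such preimages are meager. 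Consequently your construction of $O$ collapses. Note also that the quantifier dependency runs the other way: you do not get to \emph{choose} a specific $O$ and then demand that the absorption statement hold on it; rather, the absorption theorem hands you some non-meager $O$ (and some $V$ and comeager invariant $C$), and it is because the Criterion's hypothesis quantifies over \emph{all} non-meager sets that it applies to whatever $O$ emerges.

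Second, and more fundamentally, the ``turbulence-style lemma'' you invoke---existence of $V$, a $G$-invariant comeager $C$, and a non-meager $O$ such that $c \succeq_V o$ implies $f(c)\,F\,f(o)$ for $c \in C$, $o \in O$---is not available in \cite{Hj00} or \cite{LP18}; it is precisely Theorem \ref{th:Homo} of this paper, and proving it is the real content of the section. What \cite{LP18} supplies (Lemma \ref{le:Continuous}) is only local: $f$ is continuous on a comeager set $C$, and each $x_1 \in C$ has a neighborhood $U$ and a neighborhood $V$ of $1_G$ such that $f(gx) \in Wf(x)$ for generic $g \in V$, for $x \in U \cap C$. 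To pass from this to absorption along a whole approximating sequence $g_n g_* c \to o$ one must: (i) make the choice of $V$ uniform, which holds only on a \emph{non-meager} set $O$, obtained by showing that an analytic index function is constant on a non-meager set---this is where the paper's $O$ actually comes from, and it is exactly why your hope that ``the approximation lemma applies uniformly on a comeager set'' fails; (ii) run the game-theoretic argument ($\Appr_{G,V}$, together with Lemma \ref{le:ComSuffices}) to keep every point $g_n g_* c$ inside the continuity set $C$, since $f$ is merely Borel and nothing else controls it along a convergent sequence; and (iii) use Feldman--Moore to realize $F$ as the orbit equivalence relation of a countable \emph{discrete} group $H$ and take $W = \{1_H\}$, so that the conclusion $f(c) \succeq_W f(o)$ of the homomorphism theorem really yields $f(c)\,F\,f(o)$. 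Your sketch asserts the output of these steps but contains none of them, so the proposal has a genuine gap at its central point.
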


Notice that if $G$ is locally compact then one can always find a small enough $V$ as above so that $x\succeq_V y$ implies $[x]=[y]$. Moreover, it is well known that countable equivalence relations are induced by actions of countable discrete (and so locally compact) groups. Thus, Theorem \ref{Criterion} is an immediate consequence of the following result.
  
\begin{thm}
	\label{th:Homo}
	Suppose that $G$, $H$ are Polish groups, $X$ is a Polish $G$-space, and $Y$ is a Polish $H$-space. Let
	$f : X \rightarrow Y$ be a Baire-measurable $(E_X,E_Y)$-homomorphism. Then for every open neighborhood of the identity $W$ in $H$ there exist an open neighborhood of the identity $V$ in $G$, a $G$-invariant comeager $C \subseteq X$, and a non-meager $O \subseteq X$ such that $c \succeq_{V} o$ implies $f(c) \succeq_{W} f(o)$ for $c \in C$, $o \in O$. If $W=H$, one can put $V=G$, and $O$ can be chosen to be comeager.
\end{thm}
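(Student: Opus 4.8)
The plan is to push the approximation $c\succeq_V o$ through $f$ by controlling the $H$-cocycle that the homomorphism induces along the approximating sequence, localizing in the target so as to pin that cocycle inside a single $W$-coset. Fix a complete metric $d_Y$ on $Y$. Since $f$ is Baire-measurable there is a dense $G_\delta$ set $D\subseteq X$ with $f|_D$ continuous. The action map $a\colon G\times X\to X$, $a(g,x)=g\cdot x$, is continuous and open (since $a(U_1\times U_2)=\bigcup_{g\in U_1}g\cdot U_2$ is open), so it pulls comeager sets back to comeager sets; hence $a^{-1}(D)$ is comeager in $G\times X$ and, by Kuratowski--Ulam, the set $C_0=\{x\in X:\{g:g\cdot x\in D\}\text{ is comeager in }G\}$ is comeager. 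It is moreover $G$-invariant, since $\{g:g\cdot(h\cdot x)\in D\}=\{g:g\cdot x\in D\}\,h^{-1}$ is a right translate. This $C_0$ is the source of the perturbations that let me assume all relevant points lie in $D$.

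The engine is the following \emph{localized cocycle bound}, an elementary lemma in the spirit of turbulence theory \cite{Hj00,LP18}: for every open $W_0\ni 1_H$ there are a non-empty open $U'\subseteq X$ and an open $V'\ni 1_G$ such that
\[
x,x'\in U'\cap D,\ \ x'\in V'\cdot x\ \Longrightarrow\ f(x')\in \overline{W_0}\,f(x).
\]
Granting this, here is the reduction. Fix open $W_0$ with $\overline{W_0}\subseteq W$, apply the lemma to get $U',V'$, choose a symmetric open $V\ni 1_G$ with $V V^{-1}\subseteq V'$, and put $C=C_0$ and $O=U'\cap D\cap C_0$; the latter is non-meager because $U'$ is non-empty open and $D,C_0$ are comeager. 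Take $c\in C$, $o\in O$ with $c\succeq_V o$, witnessed by $g_*$ and $g_n\in V$ with $g_n g_* c\to o$. As $g_*\cdot c\in C_0$, the set $\{g:g\cdot(g_*c)\in D\}$ is dense, so I may perturb each $g_n$ within $V$ to arrange $x_n:=g_n g_* c\in D$ while keeping $x_n\to o$. Since $o\in U'\cap D$, fix $N$ with $x_N\in U'\cap D$; for all large $n$ one then has $x_N,x_n\in U'\cap D$ together with
\[
x_n=(g_n g_N^{-1})\cdot x_N,\qquad g_n g_N^{-1}\in V V^{-1}\subseteq V'.
\]
The displayed implication gives $f(x_n)\in\overline{W_0}\,f(x_N)\subseteq W f(x_N)$, while continuity of $f|_D$ yields $f(x_n)\to f(o)$. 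Writing $f(x_N)=h_*\,f(c)$ (legitimate because $f$ is a homomorphism, so $f(x_N)$ lies in the $H$-orbit of $f(c)$), I obtain $h_n\in W$ with $h_n h_* f(c)=f(x_n)\to f(o)$, that is, $f(c)\succeq_W f(o)$.

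The main obstacle is the localized cocycle bound itself. Mere continuity of $f|_D$ does \emph{not} suffice: the $H$-action on $Y$ need not be proper, so $f(x')$ can be $d_Y$-close to $f(x)$ (which continuity does give) while being joined to it only by a large element of $H$. The content of the lemma is that, after restricting to a sufficiently small open box $U'$ in the source and a correspondingly small $V'$, a genuine Baire-category argument forces the connecting $H$-element into $\overline{W_0}$; it is precisely this localization that prevents $U'$---and hence $O$---from being taken comeager, which is why the theorem only claims $O$ non-meager. Finally, the case $W=H$ is the degenerate instance of the scheme: the cocycle bound is vacuous, since $\overline{W_0}\,f(x)=[f(x)]$ already contains every $f(x')$ with $x'$ in the orbit of $x$, so one may take $U'=X$ and $V'=G$, whence $V=G$ and $O=X\cap D\cap C_0$ is comeager, giving the stronger conclusion.
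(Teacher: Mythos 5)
Your reduction scheme (perturb the approximating points into the continuity set $D$, compare consecutive approximants via $g_n'g_N'^{-1}\in VV^{-1}\subseteq V'$, use continuity of $f|_D$ at $o$, and read off $f(c)\succeq_W f(o)$) is sound and close in spirit to what the paper does with its game $\Appr$. But the proof has a genuine gap exactly where you yourself locate ``the main obstacle'': the localized cocycle bound is never proved, and it is \emph{not} the lemma that the cited references supply. What \cite{Hj00}, \cite{LP18} prove (quoted in the paper as Lemma~\ref{le:Continuous}) is pointed and generic in $g$: for $x_1\in C$ there are $U\ni x_1$ and $V'\ni 1_G$ such that for every $x\in U\cap C$ and for \emph{comeagerly many} $g\in V'$ one has $gx\in C$ and $f(gx)\in W_0 f(x)$. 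Your version demands the conclusion for \emph{every} $g\in V'$ carrying a point of $U'\cap D$ to a point of $U'\cap D$, at the price of closing $W_0$ --- and your argument genuinely needs this stronger form, since you apply the bound to the specific elements $g_n'g_N'^{-1}$. The natural attempt to upgrade generic-$g$ to all-$g$ (approximate $g_0$ by good elements $g_k$, pass to the limit using continuity of $f|_D$) only yields $f(x')\in \overline{W_0 f(x)}$, the closure in $Y$ of the \emph{set} $W_0f(x)$, which is not the same as $\overline{W_0}f(x)$: since $\overline{W_0}$ need not be compact and orbit maps $h\mapsto hf(x)$ of Polish group actions need not be closed, $\overline{W_0 f(x)}\cap [f(x)]$ can strictly contain $\overline{W_0}\,\mathrm{Stab}(f(x))\,f(x)$. (Concretely, for $S_\infty$ acting on graphs, let $y$ be the graph with infinitely many isolated vertices and infinitely many disjoint edges, and $W_0$ the stabilizer of vertex $0$, which is clopen; a limit of pointed copies $w_k y$, $w_k(0)=0$, can render vertex $0$ isolated, producing a point of $\overline{W_0y}\cap[y]$ outside $\overline{W_0}\,\mathrm{Stab}(y)\,y$.) So the lemma as stated does not follow from the literature by any routine argument, and its truth is unclear; as written, the proof is incomplete.

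The gap is reparable, and the repair essentially recovers the paper's proof. Use the generic-$g$ lemma (Lemma~\ref{le:Continuous}) applied at a single point $x_1\in C$ to get $(U',V')$ --- note it is already uniform over base points in $U'\cap C$, so no closure and no all-$g$ statement are needed (take $W_0=W$) --- set $O=U'\cap C\cap C_0$, and insert one more perturbation in your last step: replace each $g_n'g_N'^{-1}$ by a nearby $\tilde g_n$ in the comeager set of good elements of $V'$ for the base point $x_N$, so that $\tilde g_nx_N\in C$, $\tilde g_nx_N\to o$, and $f(\tilde g_nx_N)=h_nf(x_N)$ with $h_n\in W$; continuity of $f$ on $C$ then gives $h_nh_*f(c)\to f(o)$. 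This perturbation-into-comeager-sets device is what the paper formalizes via Lemma~\ref{le:ComSuffices} inside the game $\Appr$; the paper additionally uniformizes the neighborhood $V$ over the target points $o$ by means of an analytic function $N$ that is constant on a non-meager set, which is its source of the non-meager $O$ (a different mechanism from your single open box $U'$, though both yield only non-meagerness, consistent with the statement).
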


In order to prove Theorem \ref{th:Homo}, we define a variant of a game introduced in \cite{LP18}, which we denote by $\Appr_{G,V}(x,y)$. For a Polish $G$-space $X$, an open neighborhood of the identity $V$ in $G$, and $x, y \in X$, Odd and Eve play as follows:

\begin{enumerate}
	\item in the first turn, Odd plays an open neighborhood $U_1$ of $y$, and Eve responds with an element $g_* \in G$;
	\item in the $n$-th turn, $n>1$, Odd plays an open neighborhood $U_n$ of $y$, and Eve responds with an element $g_{n-1} \in V$.
\end{enumerate}

The players proceed in this way, producing an element $g_* \in G$, a sequence $\{g_n\}$ of elements of $V$, and a sequence $\{U_n\}$ of open neighborhoods of $y$ in $X$. Eve wins the game if $g_*x \in U_1$, and $g_{n-1}g_*x \in U_n$ for $n>1$. Clearly, Eve has a winning strategy in the game $\Appr_{G,V}(x,y)$ iff $x \succeq_V y$.

It is straightforward to observe the following fact:

\begin{lemma}
	\label{le:ComSuffices}
	If Eve has a winning strategy in the game $\Appr_{G,V}(x,y)$, then she also has a winning strategy in the same game but with the additional winning conditions that each $g_n$, $n>0$, belongs to some given comeager subset $V^*$ of $V$, and each $g_{n-1}g_*x$, $n>1$, belongs to some given comeager subset $C$ of $X$, provided that the set of $g \in G$ such that $gx \in C$ is comeager.
\end{lemma}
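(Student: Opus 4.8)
The plan is to build the required strategy directly: Eve copies her given strategy on the very first move, and then plays all later moves by a Baire-category perturbation. Let $\tau$ be a winning strategy for Eve in $\Appr_{G,V}(x,y)$, and let $g_*$ be the element that $\tau$ produces in response to Odd's first move $U_1$. The first, and only substantive, step is to extract a reachability statement for this concrete $g_*$. Since $\tau$ is winning and Odd is free to play an \emph{arbitrary} neighborhood $U$ of $y$ at the second turn (and then continue however we like), the play in which Eve follows $\tau$ is a win, so in particular its turn-$2$ winning condition holds: $\tau$ answers with some $g \in V$ satisfying $g g_* x \in U$. Letting $U$ range over all neighborhoods of $y$ yields $y \in \overline{V \cdot g_* x}$. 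Writing $z = g_* x$, this says precisely that for every neighborhood $U$ of $y$ the set $\{h \in V : hz \in U\}$ is a nonempty open subset of $V$.

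Next I would transfer the hypothesis on $C$ to the point $z$. Right translation by $g_*^{-1}$ is a homeomorphism of $G$, and by assumption $\{g \in G : gx \in C\}$ is comeager; since $\{h \in G : hz \in C\} = \{g \in G : gx \in C\}\, g_*^{-1}$, this latter set is again comeager in $G$, hence comeager in the open (therefore Baire) set $V$. Combining this with the comeagerness of $V^*$ in $V$, the set $V^* \cap \{h \in V : hz \in C\}$ is comeager in $V$.

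Now define the constrained strategy $\sigma$. On the first turn $\sigma$ plays $g_* = \tau(U_1)$, so the original condition $g_* x \in U_1$ holds; note that no constraint is imposed on $g_*$ or on $g_* x$, so there is nothing further to arrange here. At turn $n > 1$, given Odd's move $U_n$, the set $\{h \in V : hz \in U_n\}$ is nonempty and open in $V$ by the reachability fact, so it meets the comeager set $V^* \cap \{h \in V : hz \in C\}$ in at least one point, because a comeager subset of a Baire space meets every nonempty open subset. Eve plays any such $h$ as $g_{n-1}$. By construction $g_{n-1} \in V^*$, $g_{n-1} g_* x = hz \in U_n$ (the original winning condition), and $g_{n-1} g_* x \in C$. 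Thus $\sigma$ wins the augmented game, which is what we want.

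The heart of the argument is the first paragraph: passing from the mere existence of a winning strategy to the density statement $y \in \overline{V \cdot g_* x}$ for the specific element $g_*$ to which the strategy commits at turn $1$. I expect this to be the only point requiring care, since it is where the game-theoretic hypothesis is actually used. Everything afterward is routine Baire category of exactly the flavor used in turbulence theory, relying only on the two standard facts that a comeager subset of a Baire space meets every nonempty open set and that a finite intersection of comeager sets is comeager, together with the observation that the relevant winning conditions at turns $n>1$ are ``memoryless,'' depending only on $g_{n-1}$, $g_*$, and $U_n$.
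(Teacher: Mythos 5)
Your proof is correct: the key reachability fact $y \in \overline{V\cdot g_*x}$ for the strategy's first move $g_*$, the translation of the comeager set $\{g : gx\in C\}$ by $g_*^{-1}$, and the observation that each later winning condition defines a nonempty open subset of $V$ (which must meet any comeager subset of $V$) together give exactly the intended argument. The paper offers no proof at all, dismissing the lemma as "straightforward to observe," and your write-up is precisely the standard Baire-category argument it has in mind.
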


In \cite{LP18}, the following well known fact is stated and proved (Lemma 2.5):

\begin{lemma}
	\label{le:Continuous}
	Suppose that $G$, $H$ are Polish groups, $X$ is a Polish $G$-space, and $Y$ is a Polish $H$-space. Let
	$f: X \rightarrow Y$ be a Baire-measurable $(E_X,E_Y)$-homomorphism. Then there exists a dense $G_\delta$ subset $C \subseteq X$ such
	that
	
	\begin{itemize}
		\item the restriction of $f$ to $C$ is continuous;
		\item for any $x \in C$, $\{g \in G : gx \in C \}$ is a comeager subset of $G$; 
		\item for any open neighborhood $W$ of the identity in $H$, and $x_1 \in C$ there exists an open neighborhood
		$U$ of $x_1$ and an open neighborhood $V$ of the identity in $G$ such that for any $x \in U \cap C$, and for a comeager
		set of $g \in V$, we have that $gx \in C$, and $f(gx) \in W f(x)$.
	\end{itemize}
	
\end{lemma}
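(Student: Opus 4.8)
The plan is to realize $C$ as the intersection of a set of continuity of $f$ with a Vaught transform that is invariant under the $G$-action, and then to read off the three bullets. The first two come out almost for free from this construction, while the third is the real content.

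\emph{Tools.} I would use three standard facts. (a) A Baire-measurable map into a second-countable space is continuous on a dense $G_\delta$ set. (b) The Kuratowski--Ulam theorem, in the form that for a continuous action and a comeager Borel set $A \subseteq X$, the set $\{x : \forall^* g\,(gx \in A)\}$ is comeager, where $\forall^* g$ abbreviates ``for comeager-many $g$''; this holds because every vertical section $\{x : gx \in A\}$ is comeager (as $x \mapsto gx$ is a homeomorphism), so the preimage of $A$ under the continuous action map is comeager in $G \times X$. (c) The Vaught transform $A^{*G} = \{x : \forall^* g\,(gx \in A)\}$ is $G$-invariant and preserves the class $\boldsymbol{\Pi}^0_2$, so $A^{*G}$ is $G_\delta$ whenever $A$ is.

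\emph{Construction of $C$ and the first two bullets.} By (a), fix a dense $G_\delta$ set $D_0$ on which $f$ is continuous, and put $C = D_0 \cap D_0^{*G}$. By (b) and (c) this is a dense $G_\delta$ set. The first bullet is immediate since $C \subseteq D_0$. For the second bullet the crucial point is that $D_0^{*G}$ is invariant, which follows because right translation is a homeomorphism of $G$ and hence preserves the category quantifier $\forall^* g$: if $x \in C$ then $gx \in D_0^{*G}$ for \emph{every} $g$, so $\{g : gx \in C\} = \{g : gx \in D_0\}$, and the latter is comeager precisely because $x \in D_0^{*G}$.

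\emph{The local property (third bullet) --- the main obstacle.} Fix $x_1 \in C$ and an open $W \ni 1_H$. Using continuity of $f$ on $C$ at $x_1$ together with continuity of the $G$-action, I would first choose an open $U \ni x_1$ and an open $V \ni 1_G$ so that $f(gx)$ is forced into a prescribed small neighborhood $O$ of $f(x)$ in $Y$ whenever $x \in U \cap C$, $g \in V$, and $gx \in C$. The homomorphism hypothesis then guarantees $f(gx) \in Hf(x)$. The hard step is to upgrade ``$f(gx) \in O$'' to ``$f(gx) \in Wf(x)$''. Since the $H$-action may be turbulent, $Wf(x)$ need not be relatively open in the orbit $Hf(x)$, so this cannot be done pointwise in $g$ by a naive appeal to the Effros openness theorem, whose hypothesis (the orbit being non-meager in its closure) can fail for every orbit. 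This is exactly where the relaxation to a \emph{comeager} set of $g$ is essential: I would run a Kuratowski--Ulam argument on the relevant slice of $V$, using continuity of the $H$-action on $Y$ to produce, for comeager-many $g \in V$, a multiplier $h \in W$ with $f(gx) = hf(x)$. Pushing this through uniformly in $x \in U \cap C$ --- at the cost of re-intersecting $C$ with a further comeager $G$-invariant $G_\delta$ set, after which the invariance argument of the previous paragraph shows the first two bullets persist --- completes the proof. I expect this Effros-type category step, extracting a $W$-multiplier from the mere $Y$-closeness of $f(gx)$ to $f(x)$, to be the main difficulty, precisely because it must survive even when every $H$-orbit is meager in its closure.
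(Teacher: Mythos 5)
Your construction of $C = D_0 \cap D_0^{*G}$ and your derivation of the first two bullets are correct, and they agree with the standard argument (note that the paper does not reprove this lemma at all; it quotes it as a known fact from \cite{LP18}, Lemma 2.5, whose proof is Hjorth-style). The genuine gap is precisely at the step you yourself flag as the main difficulty, the third bullet, and your proposed resolution does not work. Having correctly observed that closeness in $Y$ cannot be upgraded to $f(gx)\in Wf(x)$ pointwise---because $Wf(x)$ may fail to be relatively open in the orbit when the target action is turbulent---you assert that the upgrade holds for comeager-many $g\in V$ by ``a Kuratowski--Ulam argument on the relevant slice of $V$, using continuity of the $H$-action.'' This is a restatement of the conclusion, not a proof: Kuratowski--Ulam only transfers category between a product and its sections, and continuity of the $H$-action runs in the wrong direction (small multipliers force proximity, whereas you must extract a small multiplier from mere membership in the orbit). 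Neither ingredient can manufacture an $h\in W$ with $f(gx)=hf(x)$, and passing to comeager-many $g$ does not by itself defeat the turbulence obstruction you identified.

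The mechanism that actually closes this gap in \cite{LP18} (following Hjorth) is the separability of $H$, which your outline never uses. Fix a countable dense set $\{h_n\}\subseteq H$ and an open $W_0\ni 1_H$ with $W_0W_0^{-1}\subseteq W$. For $x\in C$, the comeager set $\{g: gx\in C\}$ is covered by the countably many sets $B_n(x)=\{g : gx\in C,\ f(gx)\in W_0h_nf(x)\}$ (here the homomorphism property is used exactly, not through proximity); each $B_n(x)$ has the Baire property, since $g\mapsto f(gx)$ is continuous on the comeager set $\{g : gx\in C\}$ and $W_0h_nf(x)$ is analytic. By pigeonhole some $B_{n_0}(x)$ is comeager in a nonempty open $V'\subseteq G$, and the small multiplier now comes from algebra rather than from topology of $Y$: any $g_1,g_2\in B_{n_0}(x)$ satisfy $f(g_1x)\in W_0W_0^{-1}f(g_2x)\subseteq Wf(g_2x)$. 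Two further steps are still required and are also missing from your sketch: a translation argument, because $V'$ need not contain $1_G$ (one translates by some $g_0\in B_{n_0}(x)\cap V'$); and a uniformity argument making the pair $(n_0,V')$ locally constant for $x$ ranging over $U\cap C$, which needs the sets $\{x : B_n(x) \text{ is comeager in } V'\}$ to have the Baire property (category quantifiers preserve BP) and is where $C$ gets refined further---this is exactly the source of the ``analytic'' function $N$ that the proof of \cref{th:Homo} in this paper imports from the proof of this lemma. Without these ingredients your outline establishes only the first two bullets.
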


\begin{proof}[Proof of Theorem \ref{th:Homo}]
	Fix neighborhood bases $\{V_n\}$, $\{W_n\}$ at the identity in $G$, $H$, respectively. Without loss of generality, we can assume that $W=W_{n_0}$ for some $n_0$. Define $C$ as the intersection of a set obtained from Lemma \ref{le:Continuous}, and the comeager set $\{x \in X: \forall^*g\in G \ gx \in C \}$. It is clearly $G$-invariant. To define $O$, let $N$ be the function assigning to a given $c \in C$, the smallest $n$ such that $V=V_{n}$ is as given by Lemma \ref{le:Continuous} for $c$ and $W$, and $\infty$ if there is no such $n$. In the proof of \cite[Lemma 2.5]{LP18}, it is showed that $N$ is analytic and takes finite values on a comeager subset of $X$. Thus, $N$ takes a constant value $n_1 \in \NN$ on some non-meager $O' \subseteq X$. Put $O=O' \cap C$, and $V=V_{n_1}$.
	
	Suppose now that Odd and Eve play the game $\Appr_{H,W}(f(c),f(o))$, where $c \in C$, $o \in O$, and $c \succeq_{V} o$. After Odd plays $U_1 \subseteq Y$ in the first turn, let Eve play $g_* \in G$ in the first turn of the game $\Appr_{G,V}(c,o)$, assuming that Odd chose $U \cap U'_1 \ni o$, where $U \subseteq X$ is given by Lemma \ref{le:Continuous}, and $U'_1 \subseteq X$ is chosen so that $f[U'_1 \cap C] \subseteq U_1$ (which is possible because $f$ is continuous on $C$). Clearly, she can choose $g_*$ so that $g_*x \in U \cap U'_1 \cap C$. In the next turns, for $U_n \subseteq Y$ played by Odd in $\Appr_{H,W}(f(c),f(o))$, we let Odd play $U \cap U'_n$ in $\Appr_{G,V}(c,o)$, where each $U'_n \subseteq X$ is such that $f[U'_n \cap C] \subseteq U_n$. Then, using Lemma \ref{le:ComSuffices}, Eve plays $g_{n-1} \in V$ according to her winning strategy, and so that $g_{n-1}g_*c \in X_0$, and, in the game $\Appr_{H,W}(f(c),f(o))$, Eve can choose $h_* \in H$, $h_{n-1} \in W$ such that $h_{n-1}h_*f(c)=f(g_{n-1}g_*c) \in U_n$. This procedure shows that Eve has a winning strategy in the game $\Appr_{H,W}(f(c),f(o))$, i.e., $f(c) \succeq_{W} f(o)$.
	
	The last statement is obvious.
\end{proof}

\section{A Bernoulli shift action for non-archimedean Polish groups}\label{S:Bernoulli}

Let $G$ be a non-archimedean Polish group, i.e., a Polish group admitting a neighborhood basis at the identity of open subgroups. Then there is a countable language $ \mathcal{L} $ and an $\mathcal{L}$-structure $\mathcal{N}$ with universe $\NN$ so that
\[ G=\mathrm{Aut}(\mathcal{N}) \subseteq {\rm Sym}(\NN) \]
and the basis can be taken to be the pointwise stabilizers of finite subsets of $ \NN$. 
Let us refer to such an action $G\curvearrowright\NN$ as a {\bf fundamental action}. Topological properties of $G$ correlate to properties of these actions. For example:  $G$ is compact if and only if all orbits of this action are finite; $G$ is locally compact if and only if there is an open subgroup $V$ of $G$ so that every orbit in the inherited action $ V \curvearrowright \NN $ is finite; and $G$ is CLI, i.e., it admits a complete and left invariant metric, if and only if for every sequence $(g_k)$ in $G$, which is Cauchy with respect to some left invariant metric, we have that every $n\in\mathbb{N}$ can be attained as the limit $(g_k\cdot m\to_{k\to\infty} n)$ for some $m\in\mathbb{N}$, see  \cite{Gao'}.

In this section we show that many topological properties of $G$, similarly translate to \emph{generic dynamical properties} of another Polish $G$-space which we call the  \emph{Bernoulli shift for $G$}. These generic dynamics have  immediate consequences for the complexity of the associated orbit equivalence relation. The {\bf Bernoulli shift for $G$} is the action of $G$ on the space $\mathbb{R}^\NN$ of all maps $x=(x(0),x(1),\ldots)$ from $\NN$ to $\RR$, obtained by permuting the coordinates, i.e.,  $(g\cdot x)(n):=x(g^{-1}(n))$.  If for any two sets $ X $ and $ Y $ we let $[X]^{Y}$ denote the set of injections from $ Y $ to $ X $, we observe that $[\mathbb{R}]^{\NN}$ is a $ G $-invariant, dense, $G_{\delta}$ subset of $\mathbb{R}^\NN$.

For our first application, recall (see e.g., \cite[Chapter 6]{Gao}) that a $ G $-space is \emph{generically ergodic} if every $G$-invariant Borel set is meager or comeager. Generic ergodicity is equivalent to the existence of a dense orbit. If moreover, every orbit is meager  then the corresponding orbit equivalence relation must be \emph{non-smooth}. In Application 1, we see that the weaker non-compactness property of a fundamental action $ G \curvearrowright \NN $ (the existence of an infinite orbit) is upgraded to a stronger non-compactness property (generic ergodicity) in the case of the Bernoulli shift.

\subsection*{Application 1.} $G$ is not compact if and only if the $G$-space $\mathbb{R}^{\NN}$ has a $ G $-invariant subspace that is generically ergodic with meager orbits.
\begin{proof}
The right-to-left direction follows immediately from the aforementioned results that generic ergodicity together with meager orbits imply non-smoothness, recalling the fact mentioned in the introduction that every action of a compact group has a smooth orbit equivalence relation. To prove the converse, recall Neumann's lemma that for any finite $A \subseteq \NN$ of points with infinite orbits, there exists $g \in G$ such that $A \cap g[A]=\emptyset$. If $G$ is not compact, there exists an infinite orbit $O$ in $\NN$. Let $ Y = \{z \in \mathbb{R}^{\NN} \mid z(a) = 0 \text{ for every } a \in \NN \setminus O\} $. Then $ Y $ is uncountable and $ G $-invariant. Now fix any $ \hat{a} \in O $. Then for every $ r \in \mathbb{R} $, the set $ \{z \in Y \mid z(\hat{a}) = r \} $ is closed, nowhere dense, and every orbit of the action $ G \curvearrowright Y $ is contained in a countable union of such sets. Finally, using Neumann's lemma one easily constructs an element of $ Y $ with dense orbit.
\end{proof}
\begin{rem}
While we specified a subspace with meager orbits and constructed a point in it with dense orbit, it is an application of Lemma \ref{le:GoodSeq} below that in fact, when $ G $ is not compact, the orbit closure $ \overline{G \cdot z} $ of the generic $ z \in \mathbb{R}^{\mathbb{N}} $ has the stated properties.
\end{rem}

For the next application we first recall some definitions and a theorem from \cite{LP18}. Let $H$ be any Polish group. A {\bf left-Cauchy} sequence is any sequence $(h_n)$ in $H$ which is Cauchy with respect to some topologically compatible left-invariant metric on $H$. It is {\bf right-Cauchy} if $(h^{-1}_n)$ is left-Cauchy. If $X$ is a Polish $H$-space and $x,y$ are elements of $X$, then we say that $x$ {\bf right Becker-embeds into $y$} if there is a right Cauchy sequence $(h_n)$ in $H$ with $h_n y\to x$. The main theorem from \cite{LP18} states that: if for every comeager subset $C$ of $X$ there are $x,y\in C$ so that $[x]\neq [y]$ and $x$ right Becker-embeds into $y$, then $E^H_X$ is not Borel reducible to any orbit equivalence relation that is induced by the continuous action of a CLI Polish group. In Application 2, we show that the dynamical property from \cite{Gao'} which characterizes when $G$ is CLI in terms of the action $G\curvearrowright \NN$,
upgrades in the Bernoulli shift of $G$ to the generic dynamical behavior from \cite{LP18} which we just described. As a corollary, we get that every non-archimedean Polish group which is not CLI, admits a continuous action on a Polish space whose associated orbit equivalence relation is not classifiable by actions of CLI groups. Notice that this corrollary also follows from \cite{Tho}, since CLI orbit equivalence relations are pinned.

\subsection*{Application 2.} $G$ is not CLI if and only if the $G$-space $\mathbb{R}^{\NN}$ satisfies the criterion from \cite{LP18}. 

\begin{proof}
The implication from right to left is clear. To prove the converse, denote by $d$ be the left-invariant metric on $[\NN]^{\NN}$ defined by $$d(x,y)=\max\{2^{-n}:x(n) \neq y(n)\}$$ for $x \neq y$. As $G$ is non-CLI, and so $d$ is not complete on $G$ (see \cite[Lemma 2.1]{Gao}), we can fix a non-convergent Cauchy sequence $\{h_n\}$ with respect to $d$. Because $\{h_n\}$ clearly converges in $[\NN]^{\NN}$ (the definition of $d$ warranties that, for every $m$, the value $h_n(m)$ is fixed starting from some $n$ on), and $G$ is closed in ${\rm Sym}(\NN) \subseteq [\NN]^{\NN}$, $\{h_n\}$ actually converges to some $h \in [\NN]^{\NN} \setminus {\rm Sym}(\NN)$, i.e., $h[\NN] \subsetneq \NN$. Let $\phi:[\mathbb{R}]^{\NN} \rightarrow [\mathbb{R}]^{\NN}$ be defined by $$\phi(x)(n)=x(h(n)),$$ $n \in \NN$. Clearly, $\phi$ is an open surjection.
	
Fix a comeager $C \subseteq [\mathbb{R}]^{\NN}$. Then $\phi[C]$ is also comeager, and so there exists $c \in C$ such that $o=\phi(c) \in C$. As $o[\NN] \subsetneq c[\NN]$, $[c] \neq [o]$, however $h^{-1}_n c \rightarrow o$. Using the fact that $\{h^{-1}_n\}$ is a right Cauchy sequence in $X$, i.e., it is Cauchy for the right-invariant metric $d'$ defined by $d'(x,y)=d(x^{-1},y^{-1})$, it is straightforward to verify that $o$  right-Becker embeds into $o$. 
\end{proof}

In Application 3, we similarly show that the failure of local compactness of $G$---i.e. the existence of infinite orbits for all inherited actions $ V \curvearrowright \NN $ by open subgroups---is amplified to the property of Theorem \ref{Criterion} in the Bernoulli shift.

\subsection*{Application 3.} $G$ is not locally compact iff the $G$-space $\mathbb{R}^{\NN}$ satisfies the criterion from Theorem \ref{Criterion}. 

We start with a strengthening of Neumann's lemma.

\begin{lemma}
	\label{le:GoodSeq}
	Let $ G \curvearrowright \NN $ be a fundamental action and $\NN = A \sqcup B $ a partition where $ B $ consists of those elements with infinite $ G $-orbits and $ A $ the elements with finite orbits. Then there exists a sequence $\{g_n\} \subseteq G$ such that:
	\begin{enumerate}
		\item $F \cap g_n[F]=\emptyset$ for every finite $F \subseteq B$, and almost all $n$;
		\item $g_n(a)=a$ for every $a \in A$, and almost all $n$.
	\end{enumerate}
\end{lemma}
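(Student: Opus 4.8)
The plan is to reduce the construction to Neumann's separation lemma applied \emph{inside} a carefully chosen open subgroup, rather than inside $G$ itself. Fix increasing sequences of finite sets $S_0 \subseteq S_1 \subseteq \cdots$ with $\bigcup_n S_n = A$, and $F_0 \subseteq F_1 \subseteq \cdots$ with $\bigcup_n F_n = B$, and for each $n$ let $V_n = G_{S_n}$ be the pointwise stabilizer of $S_n$, an open subgroup of $G$. I will produce each $g_n$ as an element of $V_n$ (so that it automatically fixes $S_n$ pointwise) which additionally moves $F_n$ entirely off itself, i.e.\ $g_n[F_n] \cap F_n = \emptyset$. Granting this, both conclusions follow by a routine verification: any $a \in A$ lies in $S_n$ for all large $n$, hence is fixed by $g_n$ for all large $n$, giving (2); and any finite $F \subseteq B$ is contained in $F_n$ for all large $n$, whence $g_n[F] \cap F \subseteq g_n[F_n] \cap F_n = \emptyset$ for all large $n$, giving (1). (When $B=\emptyset$ the sets $F_n$ are empty and one may take $g_n = 1$; when $A = \emptyset$ one simply has $V_n = G$.)

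To obtain such a $g_n$ I will invoke Neumann's lemma for the action of $V_n$ on $\NN$ and the finite set $F_n$. For this to apply, the critical hypothesis is that every element of $F_n \subseteq B$ has infinite orbit \emph{under $V_n$}, and not merely under $G$. Establishing this is the heart of the argument and the step I expect to be the main obstacle, since in general restricting to a subgroup can collapse infinite orbits to finite ones. The key observation that rescues us is that $V_n$ has \emph{finite index} in $G$: the map $g \mapsto (g\cdot s)_{s \in S_n}$ identifies the coset space $G/V_n$ with the $G$-orbit of the tuple enumerating $S_n$, and this orbit is finite because it is contained in the product of the orbits of the points of $S_n$, each of which is finite by the defining property of $A$.

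Once finite index is in hand, the passage from infinite $G$-orbits to infinite $V_n$-orbits is pure group theory via the orbit--stabilizer relation: for $b \in F_n$, if $V_n \cdot b$ were finite then $[V_n : (V_n)_b] < \infty$, and combining this with $[G : V_n] < \infty$ through the tower law for subgroup indices would give $[G : G_b] \leq [G : (V_n)_b] = [G:V_n]\,[V_n:(V_n)_b] < \infty$, contradicting the infinitude of the $G$-orbit of $b$. It is precisely here that it is essential that we only ever fix points of $A$: fixing a point of $B$ would in general render the index infinite and destroy the argument (this already happens for the translation action of $\mathbb{Z}$ on itself, where stabilizing one point trivializes the group). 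With all points of $F_n$ now having infinite $V_n$-orbit, Neumann's lemma yields the desired $g_n \in V_n$ with $g_n[F_n] \cap F_n = \emptyset$, completing the construction.
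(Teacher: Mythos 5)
Your proof is correct, but it takes a genuinely different route from the paper's. The paper also starts from Neumann's lemma but stays inside $G$ the whole time: for fixed finite $F \subseteq B$ and $D \subseteq A$ it iterates Neumann's lemma to build $h_0, h_1, \dots \in G$ and increasing finite sets $F_0 = F$, $F_{i+1} = F_i \cup h_i^{-1}[F_i] \subseteq B$ with $F_i \cap h_i[F_i] = \emptyset$; since every point of $D$ has a finite orbit, the restrictions $h_i^{-1} \upharpoonright D$ take only finitely many values, so by pigeonhole some $m<n$ give $h_m^{-1} \upharpoonright D = h_n^{-1} \upharpoonright D$, and the composite $g_{F,D} = h_n h_m^{-1}$ fixes $D$ pointwise while moving $F$ off itself (using $h_m^{-1}[F] \subseteq F_n$). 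You instead transfer the problem into the pointwise stabilizer $V_n = G_{S_n}$: you prove $[G : V_n] < \infty$ via orbit--stabilizer applied to the tuple enumerating $S_n$ (this is exactly where $S_n \subseteq A$ is used), deduce from the tower law that infinite $G$-orbits stay infinite as $V_n$-orbits, and then apply Neumann's lemma once, to the action $V_n \curvearrowright \NN$. One point worth flagging: this requires Neumann's lemma in its general form for an arbitrary group acting on a set --- which is indeed how the lemma holds --- and not merely for the fundamental action $G \curvearrowright \NN$ as it is quoted in the paper; your application is legitimate, but the dependence should be stated. What your approach buys is conceptual transparency: the two structural facts (finite index of stabilizers of finite subsets of $A$, persistence of infinite orbits under finite-index subgroups) are isolated cleanly, and the sequence $\{g_n\}$ then falls out of a single application of Neumann's lemma per $n$. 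What the paper's approach buys is economy of hypotheses: it never needs to consider subgroup actions or indices at all, replacing them with an elementary pigeonhole-and-composition trick entirely inside the ambient action.
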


\begin{proof}
	Fix finite $F \subseteq B$, $D \subseteq A$. Put $F_0=F$, and, using Neumann's lemma, fix some $h_0 \in G$ such that $F_0 \cap h_0[F_0]=\emptyset$. Put $F_1= F_0 \cup h^{-1}_0[F_0]$, and fix $h_1 \in G$ such that $F_1 \cap h_1[F_1]=\emptyset$. In this way, construct $F_n \subseteq B$, $h_n \in G$, $n \in \NN$. As each element of $D$ is in a finite orbit, there must be $m<n$ such that $h^{-1}_m  \upharpoonright D= h^{-1}_n  \upharpoonright D$. But then $h_nh^{-1}_m \upharpoonright D$ is the identity, and, since $h^{-1}_m[F] \subseteq F_n$, we have that $F \cap h_nh^{-1}_m[F]=\emptyset$. Put $g_{F,D}=h_nh^{-1}_m$.
	
	Now, write $A$ and $B$ as increasing unions of finite sets $\bigcup A_n$ and $\bigcup_n B_n$, respectively, and construct $g_n=g_{B_n,A_n}$ as above. Clearly, $\{g_n\}$ is as required.
\end{proof}

\begin{proof}[Proof of Application 3]
	The implication from right to left is immediate by Theorem \ref{Criterion} and \cite{K92}. For the left-to-right implication, suppose that $ G $ is not locally compact and fix an open neighborhood of the identity, $ V $. As $ G $ is non-archimedean, $ V $ contains an open subgroup and so we may assume, without loss of generality, that $ V $ is a subgroup itself.
	
	Since $ G $ is not locally compact, $ V $ is not compact, and so some element in the fundamental action $ G \curvearrowright \NN $ has an infinite orbit in the induced action $ V \curvearrowright \NN $. Partition $ \NN = A \sqcup B $ as in Lemma \ref{le:GoodSeq} according to the action of $ V $, and note that the above says that $ B \neq \emptyset $, and so in fact, $ B $ must be infinite.
	
	Let $ \{b_{k}\}_{k \in \NN} $ be an enumeration of $ B $ and let $ \phi \colon [\RR]^{\NN} \to [\RR]^{\NN} $ be the map that fixes the coordinates coming from $ A $ and acts as a left-shift on the coordinates from $ B $, according to the fixed enumeration. In other words, $ \phi $ is defined by
	\begin{align*}
	\phi(x)(b_{k}) &= x(b_{k+1}), \text{ when }  b_{k} \in B\\
	\phi(x)(a) &= x(a), \text{ when } a \in A
	\end{align*}
	
	Note the following about the map $ \phi $. First, that it is a continuous and open surjection of $ [\RR]^{\NN} $ onto itself and second, since points in $ [\RR]^{\NN} $ are \emph{injections} $ \NN \to \RR $ (i.e., non-repetitive sequences), the $ G $-orbits $ [z] $ and $ [\phi(z)] $ are not equal for any $ z \in [\RR]^{\NN} $.
	
	Now, apply Lemma \ref{le:GoodSeq}, to the action $ V \curvearrowright \NN $ to obtain a sequence $ \{g_{n}\}_{n \in \NN} $ from $ V $ with properties (1) and (2) as in the lemma. Let $ C' $ consist of those points $ x \in [\RR]^{\NN} $ so that for any $ N \in \NN $, $ b_{k_{1}}, \dots, b_{k_{m}} \in B $, and open, rational intervals $ I_{1}, \dots, I_{m} \subseteq \RR $, there is an $ n \geqslant N $ with $ g_{n}(x)(b_{k_{j}}) \in I_{j} $ for all $ 1 \leqslant j \leqslant m $. Then by the properties of the sequence $ \{g_{n}\} $, $ C' $ is comeagre in $ [\RR]^{\NN} $ and for every $ x \in C' $ and $ y \in [\RR]^{\NN} $ satisfying $ x(a) = y(a) $ for all $ a \in A $, there is a subsequence $ \{g_{n_{l}}\}_{l \in \NN} $ with $ g_{n_{l}}x \to y $. In particular, this holds for any pair $ z $ and $ \phi(z) $ where $ z \in C' $.
	
	To finish verifying the conditions of Theorem \ref{Criterion}, let $ C $ be any comeagre invariant subset of $ [\RR]^{\NN} $ and let $ O $ be nonmeagre. Then choose any $ c \in (C \cap C' \cap \phi^{-1}[O]) $ and let $ o = \phi(c) $. Then $ c \in C $, $ o \in O $, and by the argument in the preceding paragraphs, $c \succeq_V o$ while $[c] \neq [o]$.
	\end{proof}

As a consequence of Application 3, every non-archimedean Polish group that is not locally compact admits a continuous action on its Bernoulli shift with an orbit equivalence relation that is not essentially countable. The Bernoulli shift  for $G$ can be also used to provide an alternative measure-theoretic proof of this fact, based on \cite[Theorem 1.6]{K92}. To see this, let $\mu$ be the product measure on $\mathbb{R}^{\NN}$ of any non-atomic probability measure on $\mathbb{R}$ and notice that it concentrates on  $[\mathbb{R}]^{\NN}$, where $G$ acts freely.

\end{document}